\newblock\usebibmacro{addendum+pubstate}}
\numberwithin{equation}{section}
\space\nolinkurl{#1}}%
\title{Volumes of foliations birationally bounded by algebraically integrable families}
\author{Zhixiu Fan}
\address{School of Mathematical Sciences, Fudan University, Shanghai 200433, China}
\email{23110180008@m.fudan.edu.cn}
\date{}
\DeclareMathOperator{\vol}{vol}
\DeclareMathOperator{\Exc}{Exc}
\DeclareMathOperator{\Supp}{Supp}
\newtheorem{thm}{Theorem}[section]
\newtheorem{lem}[thm]{Lemma}
\newtheorem{prop}[thm]{Proposition}
\newtheorem{claim}[thm]{Claim}
\theoremstyle{definition}
\newtheorem{defn}[thm]{Definition}
\newtheorem{ques}[thm]{Question}
\theoremstyle{definition}
\newtheorem{rem}[thm]{Remark}
\newtheorem{nota}[thm]{Notation}
\newtheorem*{introthm}{Theorem}
\theoremstyle{definition}
\newcommand\CS{{\mathcal{S}}}
\newcommand\CO{{\mathcal{O}}}
\newcommand\CF{{\mathcal{F}}}
\newcommand\CG{{\mathcal{G}}}
\newcommand{\bR}{{\mathbb R}}
\newcommand{\bC}{{\mathbb C}}
\newcommand{\bQ}{{\mathbb Q}}
\newcommand{\bir}{\dashrightarrow}
\newcommand{\rk}{\operatorname{rank}}
\newcommand{\mc}{%
  M\raisebox{\dimexpr\fontcharht\font`M - \height}{%
    \scriptsize c%
  }%
}
\begin{document}

\begin{abstract}
    We prove that for log canonical foliations which are birationally bounded by algebraically integrable families, the set of their volumes satisfies the DCC. This answers a special case of a question posed by Cascini, Hacon, and Langer. As a key ingredient, we establish the deformation invariance of relative log canonical volumes for a family of weak semistable morphisms, which can be viewed as a relative version of the classical result proved by Hacon, \mc Kernan, and Xu.
\end{abstract}

\maketitle

\tableofcontents

\section{Introduction}
We work over the field of complex numbers $\bC$. DCC stands for the \emph{descending chain condition}.

 The volume of a projective variety is one of the most intensively studied invariants in birational geometry. For varieties of general type, a famous result of Hacon, \mc Kernan, and Xu establishes that their volumes satisfy the DCC, and in particular, have a positive lower bound. They also prove the effective birationality for the log canonical systems for such varieties. More precisely:
\begin{introthm}[{\cite[Theorem 1.3]{HMXacc}}]\label{HMX dcc} Fix a positive integer $d$ and a DCC set $I\subset{[0,1]}$. Let $\mathcal{D}$ be the set of log canonical pairs $(X,\Delta)$ such that $\dim X=d$, the coefficients of $\Delta$ belong to $I$, and $K_X+\Delta$ is big. Then we have
\begin{enumerate}
    \item the set $\{\vol(X,K_X+\Delta)\,|\, (X,\Delta)\in \mathcal{D}\}$ satisfies the DCC;
    \item (effective birationality) there exists a positive integer $m$ depending only on $d$ and $I$, such that the linear system $|m(K_X+\Delta)|$ defines a birational map.
\end{enumerate}
\end{introthm}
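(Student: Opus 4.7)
The plan is to prove (1) and (2) simultaneously by induction on the dimension $d$, because the two assertions feed back into one another: effective birationality in dimension $d$ implies DCC of volumes in dimension $d$ via log birational boundedness, while DCC of volumes in dimensions $\leq d-1$, together with effective birationality in dimensions $\leq d-1$, is what one needs to produce the uniform $m$ in dimension $d$ through a cut-and-restrict argument on a log canonical center. The base case $d=1$ is immediate: bigness of $K_X+\Delta$ on a curve forces $\deg(K_X+\Delta)>0$, and this degree clearly lies in a DCC set once the coefficients of $\Delta$ do.

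For the direction $(2)\Rightarrow(1)$, once a uniform $m$ is available, $|m(K_X+\Delta)|$ realizes $X$ as birational onto a subvariety of a fixed projective space whose degree is bounded by a polynomial in $\vol(X,K_X+\Delta)$. A Hilbert scheme argument then places all such $(X,\Delta)$ in a log birationally bounded family; semicontinuity of volumes on such a family, combined with the DCC hypothesis on $I$, forces the set $\{\vol(X,K_X+\Delta)\}$ to satisfy DCC, since any strictly decreasing sequence would produce a limit pair with coefficients in the closure of $I$ whose volume would be strictly smaller, contradicting boundedness.

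For the harder direction $(1)\Rightarrow(2)$: given $(X,\Delta)\in\mathcal{D}$ and two very general points $x,y\in X$, I would use bigness of $K_X+\Delta$ and a standard volume estimate to construct a $\bQ$-divisor $\Gamma\sim_{\bQ}\lambda(K_X+\Delta)$, with $\lambda$ controlled by a positive lower bound on $\vol(X,K_X+\Delta)$, such that $(X,\Delta+\Gamma)$ is non-klt at both $x$ and $y$. After tie-breaking one arranges a unique minimal log canonical center $V$ through $x$ along which the pair is klt away from $V$ and remains non-klt at $y$. A Kawamata subadjunction on $V$ produces a log canonical pair $(V,\Delta_V)$ of general type whose coefficients lie in a DCC set determined by $I$; applying the inductive hypothesis to $V$ yields a uniform $m_V$ for which $|m_V(K_V+\Delta_V)|$ is birational. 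Lifting these sections from $V$ to $X$ via a Nadel-type extension then separates $x$ and $y$ in $|m(K_X+\Delta)|$ for a uniform $m$ depending only on $d$ and $I$.

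The main obstacle, as usual in this circle of ideas, is the interplay between the log canonical (rather than klt) setting and the DCC (rather than rational) nature of the coefficients. The adjunction on the minimal lc center produces boundary coefficients of the form $1-\sum\tfrac{1-i_j}{n_j}$ with $i_j\in I$ and $n_j\in\mathbb{Z}_{>0}$, and one must verify that this enlarged set still satisfies DCC, which requires a Shokurov--Birkar style accumulation argument. The extension step is equally delicate in the lc case: it likely has to go via a perturbation to klt, a vanishing/lifting theorem applied there, and a limiting procedure to recover the lc statement. Finally, upgrading the conclusion from a mere positive lower bound of volumes to the full DCC property relies crucially on the log birational boundedness produced by $(2)$, so the induction must carry both statements in lockstep rather than deducing one from the other only once.
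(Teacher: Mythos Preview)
The paper does not prove this statement at all: it is quoted verbatim from \cite[Theorem 1.3]{HMXacc} as background motivation in the introduction, and no proof (or even sketch) is given anywhere in the paper. So there is nothing to compare your proposal against.

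For what it is worth, your outline is a reasonable high-level summary of the strategy in the original Hacon--\mc Kernan--Xu paper, though several of the steps you describe (the extension/lifting in the lc case, the control of the lc center, the passage from log birational boundedness to DCC of volumes) each occupy substantial portions of \cite{HMX1} and \cite{HMXacc} and are far from routine. But since the present paper simply cites the result, you should not attempt to supply a proof here.
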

In this paper, we are interested in the volume of algebraically integrable foliations.

\medskip
\noindent\textbf{Volume of Algebraically Integrable Foliations.} In recent years, the minimal model program (MMP) for foliations has been extensively developed, starting with \cite{mcqui08}, and then \cite{ACSS21,CS21,CS251,CHLX23,LMX24}, especially for algebraically integrable ones.

 It is interesting to ask whether the above theorem of Hacon--\mc Kernan--Xu holds for foliations. However, the analogue of effective birationality fails in general, even for foliations induced by well-behaved fibrations, as shown in \cite[Theorem 1.3]{lu25}. In \cite[Theorem 2]{HL21}, Hacon and Langer showed that for a canonical model $(X,\CF)$ of a foliated surface of general type, the effective birationality holds if we fix the Hilbert polynomial $P(m)=\chi\left(X,\CO_{X}\left(mK_\CF\right)\right)$. See also \cite{Chen21,SS23,PS19,passantino24} for more progress.
 
 Hacon and Langer \cite[Question 4]{HL21} asked if volumes of canonical models of foliated surfaces are well ordered (e.g., satisfy the DCC) and admit a positive minimum. It was generalized to higher dimensions by Cascini \cite[Section 3]{cas21}. 

\begin{ques}[J.~Pereira,\,\, {\cite[Section 3]{cas21}, \cite[Question 4]{HL21}}]\label{Main Question fol}
    Let $d$ be a positive integer. Let $V=\{\vol(X,K_\CF)\}$ where $(X,\CF)$ is a  projective foliated pair with canonical singularities of general type (i.e. $K_\CF$ is big). Does $V$ satisfy the DCC or does it admit a positive minimum?
\end{ques}
Question \ref{Main Question fol} remains open even for surface foliations. See \cite{PS19,Chen21,HL21,LT22} for related works. In practice, it is often natural to consider the ``log version" of Question \ref{Main Question fol}. That is, we work with foliated triples rather than pairs, and replace  ``canonical singularities'' with ``lc singularities'' (see Definition \ref{defn: foliation singularity}).
In the recent work \cite{HJLL24}, it was shown that the volumes of lc algebraically integrable foliations belong to a discrete set depending only on their rank and the volumes of their general leaves. For stable families over a curve with klt general fibres and boundary coefficients in a finite set, the existence of a positive lower bound for the volume was proved in \cite[Corollary 1.8]{CPT25}.
 
In this paper we study Question \ref{Main Question fol} for algebraically integrable foliations via the MMP developed in \cite{ACSS21,CHLX23,LMX24}, etc. We consider a special case of Question \ref{Main Question fol} where foliations are \textbf{birationally bounded by algebraically integrable families} (cf. Definition \ref{bir bdd fol triples}). This condition is stronger than the usual assumption of birationally boundedness. We require that both the foliations and their general leaves are birationally bounded (see Remark \ref{rem: bir bdd foliations}). The main result of this paper is the following, which can be viewed as a foliated version of \cite[Theorem 1.9]{HMX1}.

\begin{thm}\label{main thm last}
Fix a DCC set $I\subset\bR^{\ge 0}$. Let $\CS$ be a set of lc algebraically integrable foliated triples $(Y,\CG,C)$ such that the coefficients of $C$ belong to $I$. Assume that $\CS$ is birationally bounded by algebraically integrable families (cf. Definition \ref{bir bdd fol triples}). 

Then the set $$\{\vol(Y,K_{\CG}+C)\,|\,(Y,\CG,C)\in \CS\}$$satisfies the DCC.
\end{thm}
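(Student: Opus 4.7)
The plan is to adapt the birationally bounded case of the Hacon--\mc Kernan--Xu DCC theorem (\cite[Theorem 1.9]{HMX1}) to the setting of algebraically integrable foliations, using as the key input the deformation invariance of relative log canonical volumes for weak semistable families announced in the abstract. The overall strategy is: reduce to a bounded family using the birational boundedness hypothesis, put that family into a standard weak semistable form, invoke deformation invariance so that volumes in the family depend only on discrete data plus coefficient information, and finally conclude DCC from the DCC of $I$.

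First, I would unpack the assumption that $\CS$ is birationally bounded by algebraically integrable families to extract a projective morphism $(\mathcal{X},\mathcal{D})\to T$ over a finite-type scheme $T$, equipped with a factorization $\mathcal{X}\to\mathcal{Z}\to T$, such that every triple $(Y,\CG,C)\in\CS$ is birationally equivalent to a fiber $\mathcal{X}_t$ in a way that identifies $\CG$ with the algebraically integrable foliation induced by $\mathcal{X}_t\to\mathcal{Z}_t$ and places the birational transform of $C$ and the exceptional locus inside $\Supp\mathcal{D}_t$. Stratifying $T$ by the support type of the induced boundary and passing to finitely many smooth irreducible strata, I may assume that the divisors carrying the boundary come from a fixed finite set of components of $\mathcal{D}$, whose coefficients on each $(Y,\CG,C)$ lie in the DCC set $I$.

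Second, I would put $\mathcal{X}\to\mathcal{Z}\to T$ into weak semistable form by combining Abramovich--Karu weak semistable reduction with a relative foliated ACSS/\emph{good model} construction from \cite{ACSS21,CHLX23,LMX24}. After a finite base change of $T$ and birational modifications of $\mathcal{X}$ and $\mathcal{Z}$, I may assume $\mathcal{X}'\to\mathcal{Z}'\to T'$ is fiberwise weak semistable and that each fiber $(\mathcal{X}'_t,\CF'_t,\mathcal{D}'_t)$ is an lc foliated good model for the corresponding $(Y,\CG,C)$. Since volumes of lc algebraically integrable foliated triples are preserved under crepant birational equivalence, it suffices to control $\vol(\mathcal{X}'_t,K_{\CF'_t}+\mathcal{D}'_t)$ as $t$ ranges over $T'$ and the coefficients of $\mathcal{D}'_t$ vary in $I$. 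Then I would apply the deformation invariance of relative log canonical volumes to this weak semistable family: once the coefficient vector on $\mathcal{D}'^h$ (the part horizontal over $\mathcal{Z}'$) is fixed, the relative volume along the fibers over $T'$ is constant. Since $K_{\CF'_t}$ agrees with $K_{\mathcal{X}'_t/\mathcal{Z}'_t}$ up to a contribution that is controlled by the weak semistable structure, the foliated volume decomposes into this (constant) relative part plus a contribution from $\mathcal{Z}'_t$ which is either trivial (when the base is fixed in the family) or handled by dimension-induction via \cite[Theorem 1.3]{HMXacc}. Combining constancy of the relative volume, the finitely many discrete choices of strata and supports, and the DCC of coefficients drawn from $I$, I obtain DCC of $\{\vol(Y,K_\CG+C)\}$.

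The main obstacle is the deformation invariance of relative log canonical volumes for weak semistable families: unlike the classical deformation invariance of plurigenera, one must control volumes in the \emph{lc} (not merely klt) category and in a family whose fibers are typically reducible, which is why the standard vanishing and extension techniques do not apply directly. Once this relative invariance is in hand, the rest of the argument is a bookkeeping exercise in the spirit of \cite[Theorem 1.9]{HMX1}, combining birational boundedness with the DCC of coefficients; the hard technical work is entirely concentrated in that relative invariance step.
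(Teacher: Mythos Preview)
Your high-level strategy---weak semistable reduction, deformation invariance of relative volumes, then conclude DCC---matches the paper's, but two substantive steps are missing or mis-described, and one remark is confused.

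First, the claim that after weak semistable reduction ``each fiber $(\mathcal{X}'_t,\CF'_t,\mathcal{D}'_t)$ is an lc foliated good model for the corresponding $(Y,\CG,C)$'' is false. Weak semistable reduction modifies $\mathcal{X}$ birationally and $\mathcal{Z}$ by a finite cover; the resulting fiber is only \emph{birational} to $Y$, with $\phi_*C\le\mathcal{D}'_t$ and $\Exc(\phi^{-1})\subset\Supp\mathcal{D}'_t$. In particular $\vol(Y,K_\CG+C)\neq\vol(\mathcal{X}'_t,K_{\CF'_t}+\mathcal{D}'_t)$ in general, so you cannot directly invoke deformation invariance on the family as it stands. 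The paper addresses this by, for each fixed $(Y,\CG,C)$: running a $(K_\CG+C)$-MMP over $\mathcal{X}'_t$ to make $K_\CG+C\le\phi^*(K_{\CF'_t}+\phi_*C)$; recognizing the remaining exceptional divisors as toroidal over $(\mathcal{X}'_t,\Sigma_{\mathcal{X}'_t})$ with centres in strata of the horizontal boundary; and then \emph{spreading} these toroidal extractions from the fiber to the whole family $\mathcal{X}'\to\mathcal{Z}'\to T'$ (using that strata have irreducible fibres over $T'$) to realize $Y$ itself as a fiber of a new family of weak semistable morphisms carrying a divisor $\Omega$ with $\Omega_t=C$. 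Only then can deformation invariance be applied to transport the volume to the fixed fiber over a chosen $0\in T'$.

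Second, the sentence ``the foliated volume decomposes into this (constant) relative part plus a contribution from $\mathcal{Z}'_t$'' is not correct: for a weak semistable $f$ one has $K_{\CF'_t}=K_{\mathcal{X}'_t/\mathcal{Z}'_t}$ on the nose, and the relative volume \emph{is} the foliated volume---there is no base contribution and no induction on $\dim\mathcal{Z}'$ via \cite{HMXacc}.

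Third, and most importantly, after the invariance step you have reduced to showing DCC for volumes of lc foliated triples $(Y,\CG,C)$ with $C$-coefficients in $I$ that are \emph{birational} to a single fixed foliated log smooth model $(\mathcal{X}'_0,\CF'_0,\Delta_0)$. This is not ``bookkeeping'': it is the foliated analogue of \cite[Lemma~3.5.2]{HMX1}, and the paper proves it separately (Proposition~\ref{dcc of fixed model}) by an eight-step weight argument adapted from \cite[Proposition~4.2]{Bir21}, using foliated adjunction on horizontal lc centres and repeated toroidal extractions to drive a lexicographic weight to zero. Without this ingredient the argument does not close; ``DCC of coefficients plus finitely many discrete choices'' is insufficient because the map from coefficient vectors to volumes is not monotone in any obvious way once birational modifications are allowed.
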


\noindent\textbf{Deformation Invariance of Plurigenera and Volumes.} A deep result of Siu \cite{Siu98,Siu02} states that the plurigenera of a family of smooth projective varieties are deformation invariant. Hacon, \mc Kernan, and Xu generalized this to the case of log pairs \cite[Theorem 1.8]{HMX1}. Specifically, let $(X,B)\to T$ be a projective morphism from a log canonical pair $(X,B)$ to a smooth variety $T$, such that $(X,B)$ is log smooth over $T$ (cf. Definition \ref{def: log smooth over base}). They proved that 
the log plurigenera $h^0(X_t,m(K_{X_t}+B_t))$ are independent of $t\in T$, assuming that $(X,B)$ is klt and $K_X+B$ is big over $T$. As a direct corollary, the log canonical volume $\vol(X_t,K_{X_t}+B_t)$ is also independent of $t\in T$. Notably, this volume invariance holds even without the klt and bigness assumptions. For more general results, see \cite[Theorem 4.2]{HMX1} and \cite[Corollary 1.3 and Lemma 6.3]{HMX2}.

 As a key ingredient in the proof of Theorem \ref{main thm last}, we establish the following result, which can be viewed as a relative version of the volume invariance discussed above.

\begin{thm}[see Theorem \ref{inv vol} for a more general form]\label{inv vol semistable} Let $(X,B)$ be a log pair. Suppose that $f:X\to Z/T$ is a contraction over a smooth variety $T$, such that
 \begin{enumerate}
     \item the morphism $f:(X,\Sigma_X)\to (Z,\Sigma_Z)$ is semistable with respect to the divisors $\Sigma_X$ and $\Sigma_Z$,
     \item $Z\to T$ is a contraction and $(Z,\Sigma_Z)$ is log smooth over $T$,
     \item $0\le B\le \Sigma^h_X$, where $\Sigma^h_X$ denotes the horizontal$/Z$ part of $\Sigma_X$.
 \end{enumerate}
Then the relative log canonical volume $\vol(X_t,K_{X_t/Z_t}+B_t)$ is independent of $t\in T$.
\end{thm}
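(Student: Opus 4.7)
\medskip
\noindent\textbf{Proof plan.} The strategy is to reduce Theorem~\ref{inv vol semistable} to the deformation invariance of log plurigenera and volumes for log smooth families over a smooth base, established by Hacon--\mc Kernan--Xu \cite{HMX1,HMX2}. Since $T$ is smooth, adjunction at the fibers gives, for each $t\in T$, the identifications $K_{X_t}=K_X|_{X_t}$, $K_{Z_t}=K_Z|_{Z_t}$, and $B_t=B|_{X_t}$; writing $D:=K_{X/Z}+B=K_X+B-f^*K_Z$, this yields $D|_{X_t}=K_{X_t/Z_t}+B_t$, so the assertion becomes the invariance of $\vol(X_t,D|_{X_t})$ in $t$. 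Observe that the semistability of $f$ is a log smoothness condition on $f\colon(X,\Sigma_X)\to(Z,\Sigma_Z)$; combined with the log smoothness of $(Z,\Sigma_Z)/T$, it yields log smoothness of $(X,\Sigma_X)/T$, and since $0\le B\le\Sigma_X^h\le\Sigma_X$, the pair $(X,B)$ is also log smooth over $T$.

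Next, I would push $D$ forward to $Z$. Semistability combined with $0\le B\le\Sigma_X^h$ ensures, via the standard base change package for relative pluri-log-canonical sheaves of log smooth morphisms, that $\mathcal{F}_m:=f_*\mathcal{O}_X(mD)$ is compatible with arbitrary base change on $Z$. Restricting along $Z_t\hookrightarrow Z$ then gives
\[
h^0\!\bigl(X_t,\, mD|_{X_t}\bigr)=h^0\!\bigl(Z_t,\,\mathcal{F}_m|_{Z_t}\bigr)\qquad \text{for every } t\in T,
\]
reducing the question to the asymptotic deformation invariance of the right-hand side in $m$.

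Upper semicontinuity of $t\mapsto h^0(Z_t,\mathcal{F}_m|_{Z_t})$ is automatic from cohomology and base change. For lower semicontinuity, I would run a Siu--HMX type extension argument on the log smooth family $(X,B)/T$: sections of $mD|_{X_0}$ on a central fiber $X_0$ should be extended to a neighborhood of $X_0$ in $X$ and then restricted to nearby fibers. To absorb the twist distinguishing $D$ from $K_X+B$, I would work with the enlarged boundary $B+f^*\Sigma_Z\le\Sigma_X$---allowed because semistability forces $f^*\Sigma_Z=(f^{-1}\Sigma_Z)_{\mathrm{red}}$ to lie in $\Sigma_X^v$---and write
\[
D=(K_X+B+f^*\Sigma_Z)-f^*(K_Z+\Sigma_Z),
\]
expressing $D$ as the difference of a log canonical divisor on the log smooth family $(X,B+f^*\Sigma_Z)/T$ and the pullback of a log canonical divisor on $(Z,\Sigma_Z)/T$. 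Combining HMX invariance for these two log smooth families, together with the pushforward identification of the previous paragraph, should yield invariance of $h^0(Z_t,\mathcal{F}_m|_{Z_t})$. The main technical obstacle is exactly this step: adapting the Siu--HMX extension argument to the twisted divisor $D$ and propagating it faithfully through the pushforward $\mathcal{F}_m$ without losing asymptotic precision; it may require either a semistable refinement of the HMX extension theorem or a fiberwise filtration of $\mathcal{F}_m$ whose graded pieces fit into the classical HMX framework. Once $h^0$-invariance is in hand, passing to $m\to\infty$ and dividing by $m^{\dim X_t}/(\dim X_t)!$ yields the invariance of $\vol(X_t,K_{X_t/Z_t}+B_t)$.
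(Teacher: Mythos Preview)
Your proposal has a genuine gap at exactly the point you flag as ``the main technical obstacle.'' You aim to establish the invariance of $h^0(X_t,m(K_{X_t/Z_t}+B_t))$ and then pass to volumes, but this plurigenera statement is \emph{not} proved in the paper and is in fact posed there as an open question (Question~\ref{question:inv of plurigenera}). Your suggested decomposition $D=(K_X+B+f^*\Sigma_Z)-f^*(K_Z+\Sigma_Z)$ does not reduce to the HMX framework: the negative twist $-f^*(K_Z+\Sigma_Z)$ cannot be absorbed into a boundary, so HMX invariance for $(X,B+f^*\Sigma_Z)/T$ and for $(Z,\Sigma_Z)/T$ separately says nothing about sections of $mD$. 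Pushing forward to $Z$ does not help either: the sheaves $\mathcal{F}_m=f_*\mathcal{O}_X(mD)$ are not of the form $\mathcal{O}_Z(m(K_Z+\Gamma))$ for any log pair $(Z,\Gamma)$, so there is no HMX-type invariance available for $h^0(Z_t,\mathcal{F}_m|_{Z_t})$. The ``fiberwise filtration'' and ``semistable refinement'' you mention would essentially be new theorems.

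The paper circumvents this entirely. Rather than attacking $h^0(X_t,m(K_{X_t/Z_t}+B_t))$, it introduces an auxiliary interpolating divisor
\[
K_{\mathfrak{A}_\delta}:=\delta K_{X/Z}+(1-\delta)K_X+B+\tfrac{1}{k}A+(1-\delta)H,
\]
where $A$ is a small general ample divisor on $X$ and $H=\tfrac{1}{2}f^*(\sum H_i)$ comes from very ample divisors on $Z$. The point is that for $\delta<1$ this divisor contains a genuine $K_X$ contribution, so one can run an MMP$/T$ on the klt pair $(X,\Theta+\tfrac{1}{k}A+H)$ (with $\Theta$ a suitable reduction of $B$), reach a good minimal model, and apply Kawamata--Viehweg vanishing to get the extension $|mK_{\mathfrak{A}_{\delta,0}}|=|mK_{\mathfrak{A}_\delta}||_{X_0}$, following the template of \cite[Lemmas~3.1--3.2]{HMX2}. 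A separate preliminary step (using foliation MMP results from \cite{ACSS21,CHLX23}) shows $K_{X/Z}+B$ is pseudo-effective$/T$, which is needed to control the MMP. Only after $h^0$-invariance for $K_{\mathfrak{A}_\delta}$ is established does one let $k\to\infty$ and then $\delta\to 1^-$; continuity of volume recovers the invariance of $\vol(X_t,K_{X_t/Z_t}+B_t)$ without ever proving plurigenera invariance for $K_{X/Z}+B$ itself.
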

For the definition of a semistable morphism, we mainly follow \cite{AK00} with minor modification. See Definition \ref{def: wss} and Remark \ref{rem of def of wss}. In Theorem \ref{inv vol semistable}, if we set $Z=T$, the condition that $(Z,\Sigma_Z)$ is log smooth over $T$ implies $\Sigma_Z=0$, and thus $(X,\Sigma_X)$ is log smooth over $T$. Consequently, this reduces to the volume invariance result of Hacon--\mc Kernan--Xu described above.

We further explain the conditions in Theorem \ref{inv vol semistable}. Consider the foliation $\CF$ induced by $f$. The assumption $0\le B\le\Sigma^h_X$ ensures that the foliated triple $(X,\CF,B)$ has log canonical singularities (see Definition \ref{defn: foliation singularity} and Remark \ref{fol log sm is lc}). Therefore, this theorem can also be viewed as a version of invariance of log canonical volume for foliations. In this direction, see \cite{CF18} for some results in the surface case. Here we mention that, although the volume is invariant under the assumptions of Theorem \ref{inv vol semistable}, we do not actually show the invariance of plurigenera during the proof. We may therefore propose the following:
\begin{ques}\label{question:inv of plurigenera}
    Notation as in Theorem \ref{inv vol semistable}. Further assume that $K_{X/Z}+B$ is big over $T$. Is $h^0(X_t,m(K_{X_t/Z_t}+B_t))$ independent of $t\in T$ for any sufficiently divisible positive integer $m$? 
\end{ques}
\begin{rem}
    Note that in Question \ref{question:inv of plurigenera}, the assumption that $m$ is sufficiently divisible is necessary, as the invariance of plurigenera fails for small values of $m$, even if $f:X\to Z$ is a smooth morphism (see \cite[Example 3.10]{CF18}).
\end{rem}

\noindent\textbf{Structure of the paper.} The paper is organized as follows. In Section \ref{preliminaries}, we gather some basic notation and tools used in the paper. In Section \ref{section: dcc for fixed model}, we prove a special case of Theorem \ref{main thm last}, where the foliated triples are birational to a fixed foliated log smooth model (cf. Proposition \ref{dcc of fixed model}). In Section \ref{section4}, we prove the invariance of relative log canonical volumes for a family of weak semistable morphisms (cf. Theorem \ref{inv vol}), which is a more general form of Theorem \ref{inv vol semistable}. Finally in Section \ref{section: main thm}, we prove the main result Theorem \ref{main thm last}, by combining the technique of weak semistable reduction \cite{AK00} (cf. Theorem \ref{weak semistable reduction}) with Proposition \ref{dcc of fixed model} and Theorem \ref{inv vol}.

\section{Preliminaries}\label{preliminaries}
Throughout this paper, we mainly work with normal quasi-projective varieties to ensure consistency with the references. We adopt the standard notation and definitions in \cite{KM98, BCHM10}. For foliations, we generally follow the notation and definitions in \cite{CS21, ACSS21, CHLX23, CS251}, but there may be minor differences.
\subsection{Special notation}

\begin{nota}
    In this paper, a \emph{contraction} is a projective morphism between quasi-projective normal varieties $f:X\to Y$ such that $f_*\CO_X=\CO_Y$. In particular, $f$ has connected fibres. Let $f:X\to Z$ be a contraction, and $D$ an $\bR$-divisor on $X$. We define the \emph{horizontal$/Z$ part} $D^h$ of $D$ as the part of $D$ supported on prime divisors dominating $Z$; the \emph{vertical$/Z$ part} of $D$ is $D^v:=D-D^h$. A \emph{finite cover} is a finite surjective morphism. 
    
    Let $f:X\dasharrow X'$ be a birational map between normal varieties. We denote by $\Exc(f)$ the reduced divisor supported on the codimension one part of the exceptional locus of $f$. 
    
    Let $X$ be a normal variety, and let $M$ be an $\bR$-divisor on $X$. We denote the coefficient of a prime divisor $D$ in $M$ by $\mu_D M$.

The notation  ``/'' is used as shorthand for ``over''. For example, ``$/Z$'' means ``over $Z$''.

\end{nota}
    
\begin{defn}[Relative log smoothness]\label{def: log smooth over base}
	Given a log pair $(X,B)$ and a morphism $X\to T$, we say that $(X,B)$ is \emph{log smooth over $T$} if $(X,B)$ is log smooth and both $X$ and every stratum of $(X,B)$ are smooth over $T$. Here, a \emph{stratum} of $(X,B=\sum_{i=1}^{n}b_i B_i)$ refers to an irreducible component of the intersection $\bigcap_{i\in I}B_i$, where $I\subset \{1,...,n\}$ is a nonempty subset.
\end{defn}
\begin{defn}
    Let $X$ be a normal projective variety of dimension $d$, and $D$ an $\bR$-divisor on $X$. The \emph{volume} of $D$ is defined as: $$\vol(X,D):=\limsup_{m\to \infty}{\frac{h^0(X,\lfloor mD\rfloor)}{m^d/d!}}.$$ 
\end{defn}
For more background, see \cite{Laz04}.

\subsection{Toroidal geometry and weak semistable reduction}
We recall some basic facts of toroidal embedding, following the standard notation and definitions in \cite{KKMS73} and \cite{AK00}. We also adopt the notion of \emph{toroidal couples} (cf. \cite[Section 3.7]{Bir23}).
 See also \cite{Qu25} for a more detailed treatment. 
 We refer to \cite{Cox11} for the general theory of toric varieties. 
 
 Recall that a \emph{couple} $(X,\Sigma_X)$ consists of a normal variety $X$ and a reduced Weil divisor $\Sigma_X$ on $X$. A morphism $(X,\Sigma_X)\to (Y,\Sigma_Y)$ of couples is a morphism $f:X\to Y$ between normal varieties such that $f^{-1}(\Sigma_Y)\subset{\Sigma_X}$ (cf. \cite[Section 3.6]{Bir23}).

\begin{defn}[Toroidal couple and toroidal embedding, {cf. \cite[II \S 1, Definition 1]{KKMS73} and \cite[Definition 1.2]{AK00}}]
 Let $(X,\Sigma_X)$ be a couple. We say that the couple is \emph{toroidal} if for any closed point $x\in X$, there exists a normal affine toric variety $W$ together with a closed point $w\in W$, such that there is an isomorphism of local complete $k$-algebras
$$\phi:\widehat{\mathcal{O}}_{X,x}\to\widehat{\mathcal{O}}_{W,w}$$ mapping the ideal of $\Sigma_X$ to the ideal of the toric boundary $\Sigma_W\subset W$ (that is, the reduction of the complement of the torus $\mathbb{T}_W$ in $W$). We call the data $\{(W,\Sigma_W),w\}$ a \emph{local toric model} of $\{(X,\Sigma_X),x\}$.

Let $(X,\Sigma_X)$ be a toroidal couple, and let $U_X:=X\backslash \Supp \Sigma_X$, which is an open subset of $X$. We call $(U_X\subset X)$ a \emph{toroidal embedding}, see \cite[page 54]{KKMS73}. 
\end{defn}

\begin{rem}\label{toroidal lc}
     If $(X,\Sigma_X)$ is a toroidal couple, then $X$ is normal and Cohen--Macaulay, $K_X+\Sigma_X$ is Cartier, and $(X,\Sigma_X)$ is an lc pair (see \cite[Lemma 3.8]{Bir23}). Moreover, if $X$ is smooth, then $(X,\Sigma_X)$ is log smooth. 
\end{rem}

\begin{nota}[Strict toroidal, strata]\label{strict toroidal}
    Let $(X,\Sigma_X)$ be a toroidal couple. We say that $(X,\Sigma_X=\sum_{i=1}^{n}D_i)$ is \emph{strict toroidal} if every irreducible component of $\Sigma_X$ is normal (cf. \cite[page 57]{KKMS73}). In this paper, all the toroidal couples $(X,\Sigma_X)$ are assumed to be \textbf{strict}.
    
    In this case, $\bigcap_{i\in I}D_i$ is normal and $\bigcap_{i\in I}D_i\backslash\bigcup_{i\notin I}D_i$ is smooth for each subset $I\subset\{1,...,n\}$ (cf. \cite[page 57]{KKMS73}). The irreducible components of $\bigcap_{i\in I}D_i$ are called the \emph{strata} of $(X,\Sigma_X)$, and are therefore normal. If $X$ is clear from the context, we will call them the strata of $\Sigma_X$. Moreover, if a stratum $S$ of $\Sigma_X$ is an irreducible component of the intersection of components of a reduced divisor $D$ (where $D\le\Sigma_X$), we also call $S$ a stratum of $D$.

    Let $(X,\Sigma_X)$ be a (strict) toroidal couple. The non-klt centres of this pair are exactly its strata. Assume that $0\le B\le \Sigma_X$, $K_X+B$ is $\bQ$-Cartier, and let $V$ be a non-klt centre of $(X,B)$. Then the adjunction formula $K_V+B_V=(K_X+B)|_V$ holds. Moreover, if we write $K_V+\Sigma_V=(K_X+\Sigma_X)|_V$, then $(V,\Sigma_V)$ is also toroidal (cf. \cite[Section 2.12]{Bir21}).

\end{nota}

\begin{defn}[Quasi-smooth toroidal couple, {cf. \cite{AK00}}]\label{quasi-smooth}A toroidal couple $(X,\Sigma_X)$ is \emph{quasi-smooth} if it has quotient toric singularities. That is, locally in the analytic topology, it is the quotient of a smooth variety by a finite Abelian group.
\end{defn}

\begin{rem}\label{quasi-sm iff q-factorial}
    Each toroidal couple $(X,\Sigma_X)$ can be associated with a rational conical polyhedral complex $\Delta_X$ (see \cite[Section 1.3]{AK00}). $(X,\Sigma_X)$ is quasi-smooth if and only if $\Delta_X$ is simplicial, which is equivalent to $X$ being $\bQ$-factorial. To prove the latter, one may use the notion of \emph{\'etale-toric charts} (see \cite[page 195]{KKMS73} and \cite[Theorem 2.13]{Qu25})
    to reduce to the toric case.
\end{rem}

\begin{defn}[Toroidal morphism, {cf. \cite[Definition 1.3]{AK00}}] Let $f:(X,\Sigma_X)\to (Y,\Sigma_Y)$ be a dominant morphism of couples.  $f$ is called a \emph{toroidal morphism} if for every closed point $x$, there exist local toric models $\{(W,\Sigma_W),w\}$ and $\{(V,\Sigma_V),v\}$ of $\{(X,\Sigma_X),x\}$ and $\{(Y,\Sigma_Y),y=f(x)\}$, respectively, together with a toric morphism $W\to V$ of toric varieties inducing a commutative diagram
\begin{equation*}
    \begin{aligned}\label{formal-defn-diagram}
    \xymatrix{
    \widehat{\mathcal{O}}_{{X},{x}}\ar[r] & \widehat{\mathcal{O}}_{{W},w}\\
    \widehat{\mathcal{O}}_{{Y},{y}} \ar[u] \ar[r] & \widehat{\mathcal{O}}_{{V},v} \ar[u]
    }\end{aligned}
\end{equation*}
 Here, the vertical maps are induced by the given morphisms and the horizontal maps are isomorphisms induced by the local toric models. 
 
 Suppose $f:(X,\Sigma_X)\to(Y,\Sigma_Y)$ is a toroidal morphism with $(Y,\Sigma_Y)$ log smooth. Then $f$ is equi-dimensional if and only if $f$ is flat (cf. \cite[Remark 4.6]{AK00}). Moreover, the composition of toroidal morphisms is also toroidal (cf. \cite[Corollary 1.6]{AK00}).
\end{defn}

\begin{nota}[Toroidal divisor and toroidal extraction]\label{toroidal extraction}
    Let $(X,\Sigma_X)$ be a quasi-smooth toroidal couple. Let $E$ be an exceptional prime divisor over $X$ with discrepancy $a(E,X,\Sigma_X)<0$. Then by Remark \ref{toroidal lc}, $a(E,X,\Sigma_X)=-1$, and the centre of $E$ on $X$ is a stratum of $\Sigma_X$. In this case we say that $E$ is \emph{toroidal with respect to $(X,\Sigma_X)$}.
 
    There is an extremal birational contraction $p:X'\to X$ which extracts $E$ but no other divisors. This morphism is a toroidal modification in the sense of \cite[Section 1.4]{AK00}. Let $K_{X'}+\Sigma_{X'}$ be the pullback of $K_X+\Sigma_X$. Then $(X',\Sigma_{X'})$ is also quasi-smooth toroidal \cite[page 90, Theorem 6*]{KKMS73}. Moreover, $(X',\Sigma_{X'})$ is unique with these properties. We call this $p:(X',\Sigma_{X'})\to (X,\Sigma_X)$ the \emph{toroidal extraction of $E$}.
\end{nota}    
We will need the following two lemmas in the proof of Theorem \ref{inv vol semistable}, Section \ref{section4}.
\begin{lem}[Restriction of toroidal morphism]\label{lem: restriction of toroidal}
    Suppose $f:(X,\Sigma_X)\to(Y,\Sigma_Y)$ is a flat toroidal morphism such that $(Y,\Sigma_Y)$ is log smooth. Let $D\subset\Sigma_X$ be a horizontal$/Y$ component of $\Sigma_X$. Then the restriction $f_D:(D,\Sigma_D)\to (Y,\Sigma_Y)$ is also a flat toroidal morphism. Moreover, if $f$ has reduced fibres, then so does $f_D$.  
\end{lem}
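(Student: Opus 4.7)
The plan is analytic-local. At each closed point $d \in D$ we will exhibit a local toric model for $(D, \Sigma_D)$ and a toric morphism to a local toric model of $(Y, \Sigma_Y)$ that fits into the commutative square of completed local rings required by the definition of toroidal morphism. Fix $d \in D$ with $y = f(d)$, and begin with local toric models $\{(W, \Sigma_W), w\}$ of $\{(X, \Sigma_X), d\}$ and $\{(V, \Sigma_V), v\}$ of $\{(Y, \Sigma_Y), y\}$ together with the given toric morphism $h : W \to V$. Write $W = X_\sigma$, $V = X_\tau$, and let $\phi : N_W \to N_V$ be the lattice map defining $h$, so $\phi_{\bR}(\sigma) \subset \tau$.

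Since $D$ is an irreducible component of $\Sigma_X$ through $d$, the isomorphism $\widehat{\mathcal{O}}_{X,d} \cong \widehat{\mathcal{O}}_{W,w}$ identifies each analytic branch of $D$ with a toric divisor $V(\rho) \subset W$ for some ray $\rho$ of $\sigma$, and carries the ideal of $\Sigma_D = (\Sigma_X - D)|_D$ onto the ideal of the toric boundary $\Sigma_{V(\rho)} = (\Sigma_W - V(\rho))|_{V(\rho)}$. The crucial combinatorial observation is that horizontality of $D$ over $Y$ forces $\rho \subset \ker\phi$: dominance of $D \to Y$ makes the pullback $\widehat{\mathcal{O}}_{V,v} \to \widehat{\mathcal{O}}_{V(\rho),w}$ injective, which forces $\phi^*(M_V) \subset \rho^\perp$ and hence $\phi(\rho') = 0$. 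Consequently $\phi$ descends to a lattice map $\bar\phi : N_W / \mathbb{Z}\rho' \to N_V$ with $\bar\phi_{\bR}(\bar\sigma) \subset \tau$, giving a toric morphism $\bar h : V(\rho) \to V$ that completes the required commutative square for $f_D$. Thus $\{(V(\rho), \Sigma_{V(\rho)}), w\}$ is a local toric model for $\{(D, \Sigma_D), d\}$ and $f_D$ is a toroidal morphism.

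For flatness, the remark following the definition of toroidal morphism reduces matters to equi-dimensionality. This follows from flatness of $f$ (its fibres are pure of dimension $\dim X - \dim Y$) together with horizontality of $D$ (no irreducible component of a fibre of $f$ is contained in $D$, since $\bar h$ is dominant), which makes $D \cap f^{-1}(y)$ pure of dimension $\dim D - \dim Y$ for every $y$. For the reduced-fibre statement, the combinatorial criterion for a toric morphism to a smooth base to have reduced fibres is that each primitive ray generator of $\sigma$ is sent by $\phi$ either to $0$ or to the primitive generator of a ray of $\tau$. Passing to $\bar\phi$, the rays of $\bar\sigma$ come from rays $\rho_i \neq \rho$ of $\sigma$, and integrality of $\bar\phi$ combined with primitivity of the original values $\phi(\rho_i')$ forces the corresponding primitive generator in $N_W / \mathbb{Z}\rho'$ to still map to a primitive vector of $N_V$. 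Hence $\bar h$ inherits reduced fibres.

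The main obstacle is the second-paragraph dictionary: carefully identifying $\Sigma_D$ with $\Sigma_{V(\rho)}$ on the level of completed local rings, and establishing the precise equivalence between horizontality of $D$ and the toric condition $\rho \subset \ker\phi$. Once this translation is in place, both flatness and reduced fibres fall out of short toric calculations.
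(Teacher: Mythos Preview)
Your proposal is correct and follows essentially the same approach as the paper: reduce to local toric models, observe that the horizontal component $D$ corresponds to a ray $\rho$ mapped to the zero cone by $\phi$, obtain the induced toric morphism on the orbit closure $V(\rho)$, and then verify flatness and reduced fibres combinatorially. The paper compresses each of these steps into a citation---\cite[Lemma~4.1]{AK00} for ``$\rho$ maps to the zero cone'' and for flatness (cones map onto cones), \cite[Lemma~3.3.21]{Cox11} for the induced toric morphism on $V(\rho)$, and \cite[Lemma~5.2]{AK00} for reduced fibres---whereas you unpack these references by hand; the underlying combinatorics is the same.
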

\begin{proof}
    $(D,\Sigma_D)$ is obtained by adjunction (Notation \ref{strict toroidal}). It suffices to consider the toric morphism of local toric models $f:X_\sigma\to X_\tau$. By assumption $D$ corresponds to a ray of the cone $\Sigma$ of $X_\sigma$ and maps to the zero cone of $X_\tau$ (\cite[Lemma 4.1]{AK00}). By \cite[Lemma 3.3.21]{Cox11}, the restriction of $f$ to $D$ is a well-defined toric morphism. Again by \cite[Lemma 4.1]{AK00}, it is flat, since it maps cones to cones. Moreover, if $f$ has reduced fibres, then we can apply \cite[Lemma 5.2]{AK00} to conclude that $f_D$ also has reduced fibres.
 \end{proof}

\begin{lem}\label{extract hor wss}
    Suppose $f:(X,\Sigma_X)\to(Y,\Sigma_Y)$ is a flat toroidal morphism such that $(Y,\Sigma_Y)$ is log smooth. Let $E$ be a toroidal divisor with respect to $(X,\Sigma_X)$, such that the centre of $E$ on $X$ is horizontal$/Y$. Let $p:(X',\Sigma_{X'})\to (X,\Sigma_X)$ be the toroidal extraction of $E$. Then the induced morphism $f':(X',\Sigma_{X'})\to(Y,\Sigma_Y)$ is also a flat toroidal morphism. Moreover, if $f$ has reduced fibres, then so does $f'$.
\end{lem}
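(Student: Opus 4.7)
The plan is to verify all assertions locally via toric models, paralleling the proof of Lemma \ref{lem: restriction of toroidal}. At a closed point $x'\in X'$ with $x=p(x')\in X$ and $y=f(x)\in Y$, the toroidality of $f$ together with the log smoothness of $(Y,\Sigma_Y)$ yields a local toric model: $f$ is given by a toric morphism of affine toric varieties $X_\sigma\to X_\tau$ coming from a lattice map $\phi:N\to N'$, with $\tau\subset N'_{\bR}$ generated by part of a basis of $N'$. By Notation \ref{toroidal extraction}, the centre of $E$ on $X$ is a stratum of $\Sigma_X$, which corresponds locally to a face $\rho\preceq\sigma$; the toroidal extraction $p$ corresponds to the star subdivision of the face fan of $\sigma$ along a primitive vector $v_E\in N$ lying in the relative interior of $\rho$. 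The assumption that this centre is horizontal$/Y$ means its image in $X_\tau$ is dense, which forces $\phi(\rho)=0$, and in particular $\phi(v_E)=0$.

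To see that $f'$ is toroidal, observe that the new cones of the star subdivision have the form $\langle v_E,F\rangle$, where $F$ ranges over faces of cones of the fan of $\sigma$ that do not contain $v_E$. Since $\phi(v_E)=0$, we have $\phi(\langle v_E,F\rangle)=\phi(F)$, which is a cone of the face fan of $\tau$ by the equi-dimensionality of $f$ (cf.~\cite[Lemma 4.1]{AK00}). Thus $\phi$ remains compatible with the subdivided fan, and at each $x'\in X'$ we obtain a local toric model $X_{\sigma'}\to X_\tau$ for the cone $\sigma'$ of the subdivision containing $x'$; this realises $f'$ as toroidal. The same observation shows that every cone of the subdivision maps onto a cone of the face fan of $\tau$, so $f'$ is equi-dimensional, hence flat by \cite[Remark 4.6]{AK00}.

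Finally, for reduced fibres I would apply \cite[Lemma 5.2]{AK00}, which characterises them via the surjectivity of the induced lattice maps $N_{\sigma_0}\to N'_{\phi(\sigma_0)}$ on the saturated sublattices spanned by each cone and its image. For a new cone $\sigma_0=\langle v_E,F\rangle$ with $\phi(\sigma_0)=\phi(F)$, the lattice $N_{\sigma_0}$ is generated by $v_E$ together with $N_F$; since $\phi(v_E)=0$, its image equals $\phi(N_F)$, so the required surjectivity reduces to the one for $F$, which holds by hypothesis on $f$. Cones of the subdivision that do not contain $v_E$ are unchanged and inherit the condition from $f$ directly. The expected main obstacle is the careful bookkeeping of the lattice condition of \cite[Lemma 5.2]{AK00} across the subdivision; the key simplification is that $\phi(v_E)=0$ renders the new ray $\bR_{\ge 0}v_E$ invisible to $\phi$, so every compatibility to be checked for $f'$ reduces to one already satisfied by $f$.
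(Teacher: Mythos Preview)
Your proof is correct and follows essentially the same approach as the paper's, working through the local toric picture and invoking \cite[Lemma 4.1, Remark 4.6, Lemma 5.2]{AK00}. The paper's own proof is much terser (three sentences): it observes that $f'$ is toroidal as a composition of toroidal morphisms, cites \cite[Remark 4.2]{AK00} for equi-dimensionality (hence flatness) from the horizontality of the centre, and cites \cite[Lemma 5.2]{AK00} for reduced fibres---your argument simply unpacks these citations explicitly.
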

\begin{proof}
    Clearly, $f'$ is a toroidal morphism. Since the centre of $E$ on $X$ is horizontal$/Y$, $f'$ is equi-dimensional (cf. \cite[Remark 4.2]{AK00}) and hence flat. Assume that $f$ has reduced fibres. Then by \cite[Lemma 5.2]{AK00}, $f'$ also has reduced fibres as $E$ is horizontal$/Y$.
\end{proof}

\begin{defn}[Weak semistable, {cf. \cite{AK00}}]\label{def: wss}
   A contraction $f:X\to Z$ between normal varieties is called \emph{weak semistable} if there exist divisors $\Sigma_X$ on $X$ and $\Sigma_Z$ on $Z$ such that 
	\begin{enumerate}
	    \item $f$ is a toroidal morphism between the toroidal couples $(X,\Sigma_X)$ and $(Z,\Sigma_Z)$,
		
		\item $f$ is flat,
		
		\item all fibres of $f$ are reduced, and
		
		\item $(X,\Sigma_X)$ is quasi-smooth and $(Z,\Sigma_Z)$ is log smooth. 
		
	\end{enumerate}
In particular, the vertical$/Z$ part of $\Sigma_X$ is equal to $f^*(\Sigma_Z)$. We also say that $f:(X,\Sigma_X)\to(Z,\Sigma_Z)$ is weak semistable with respect to the divisors $\Sigma_X$ and $\Sigma_Z$. If $X$ is also smooth, we say that the morphism $f:X\to Z$ is \emph{semistable}. 
\end{defn}
\begin{rem}\label{rem of def of wss}
    The above definition is slightly different from \cite[Definition 0.1]{AK00}. First, we require that $(X,\Sigma_X)$ is quasi-smooth. This condition can be achieved by a further small toroidal birational modification (cf. \cite[Remark 4.5]{AK00}). Second, the original definition in \cite{AK00} requires that $f(\Sigma_X)\subset{\Sigma_Z}$ (that is, $\Sigma_X$ has no horizontal$/Z$ components), but we do not impose this condition. This definition is the same as \cite[Definition 3.7]{Jiao25}. See also \cite[Definition 2.19]{EH25} and \cite[Definition 2.2]{ALT18}.
\end{rem}
The following result on weak semistable reduction, established in \cite{AK00}, will be used in the proof of Theorem \ref{main thm last}.

\begin{thm}[Weak semistable reduction, {cf. \cite{AK00}}]\label{weak semistable reduction}
Let $f:X\to Z$ be a contraction between normal varieties, and let $B\subset X$ be a divisor. Then there exists a proper surjective, generically finite morphism $b: Z'\to Z$, a birational contraction $a: X'\to (X\times_Z Z')^m$, and divisors $\Sigma_{X'}\subset{X'}, \Sigma_{Z'}\subset{Z'}$, such that 
\begin{itemize}
    \item $a^{-1}(B\times_Z 
    Z')\subset{\Sigma_{X'}}$, and
    \item the morphism $f':(X',\Sigma_{X'})\to (Z',\Sigma_{Z'})$ is weak semistable.
\end{itemize}
Here, $(X\times_Z Z')^m$ denotes the normalization of the main component of $X\times_Z Z'$. 
\end{thm}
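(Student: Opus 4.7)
The plan is to derive this theorem from the original weak semistable reduction of Abramovich--Karu, with two small adjustments: we must arrange that the prescribed divisor $B$ lies in the boundary on $X'$, and we must match Definition \ref{def: wss} (which requires quasi-smoothness of $(X',\Sigma_{X'})$ and allows horizontal components in $\Sigma_{X'}$, unlike the original formulation in [AK00, Definition 0.1]).

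First, I would enlarge $B$ to contain the singular locus of $X$, the branch/discriminant locus of $f$, and the singular locus of $Z$, then apply toroidalization to the morphism $f$: there exist birational modifications $Z_1\to Z$ and $X_1\to X\times_Z Z_1$ (taking the main component and normalizing) together with reduced divisors $\Sigma_{X_1}$ and $\Sigma_{Z_1}$ containing the respective total transforms of $B$ and the exceptional loci, such that $f_1:(X_1,\Sigma_{X_1})\to(Z_1,\Sigma_{Z_1})$ is toroidal and $(Z_1,\Sigma_{Z_1})$ is log smooth. This is [AK00, Theorem 2.1] (based on de Jong's alterations); a cleaner modern statement is the functorial toroidalization of Abramovich--Temkin--W{\l}odarczyk. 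Crucially, the boundary $\Sigma_{X_1}$ is chosen to contain the strict transform of $B$, and this property is preserved by every subsequent modification.

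Second, I would apply the base-change procedure of [AK00, Sections 4 and 5] to $f_1$. Working on the associated rational conical polyhedral complex, one constructs a toric Kawamata cover $b:Z'\to Z_1$ and then takes $X'_0$ to be the normalization of the main component of $X_1\times_{Z_1} Z'$. The combinatorial subdivision is arranged so that the resulting toroidal morphism $f'_0:(X'_0,\Sigma_{X'_0})\to(Z',\Sigma_{Z'})$ is equi-dimensional -- hence flat by [AK00, Remark 4.6] -- and has all fibres reduced [AK00, Proposition 5.1]. Finally, if $(X'_0,\Sigma_{X'_0})$ is not yet quasi-smooth, a further small toroidal modification $X'\to X'_0$ (simplicial subdivision of the cones of the complex, [AK00, Remark 4.5]) produces a quasi-smooth $(X',\Sigma_{X'})$ without altering the flatness or the reducedness of the fibres (the modification is an isomorphism in codimension one and is horizontal, so Lemmas \ref{lem: restriction of toroidal} and \ref{extract hor wss} apply). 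The composition $a:X'\to (X\times_Z Z')^m$ is a birational contraction by construction, and $a^{-1}(B\times_Z Z')\subset\Sigma_{X'}$ since the strict transform of $B$ lies in $\Sigma_{X_1}$ from the start.

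The main obstacle is really bookkeeping: one has to check that the divisor $B$ survives in $\Sigma_{X'}$ through the toroidalization, base change, and simplicial subdivision steps. This is a direct consequence of the convention in [AK00] that $\Sigma_X$ always includes the exceptional locus of each intermediate birational modification, together with the fact that base change and simplicial subdivision only add rays (components) to the complex rather than remove them. The remaining content of the theorem is a black-box invocation of [AK00, Theorem 0.3].
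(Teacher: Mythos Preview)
Your outline is correct and matches the paper's approach: the paper does not give its own proof of this statement but simply cites it from \cite{AK00}, with Remark~\ref{rem of def of wss} noting the two adjustments (quasi-smoothness via \cite[Remark 4.5]{AK00}, and allowing horizontal components in $\Sigma_{X'}$) that you have spelled out. Your sketch is essentially an unpacking of that citation.
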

We call $f':(X',\Sigma_{X'})\to (Z',\Sigma_{Z'})$ the \emph{weak semistable model} of $f: (X,B)\to Z$.
\subsection{Foliations}

\begin{defn}[Foliations, {cf. \cite{ACSS21,CS21}}]\label{defn: foliation}
Let $X$ be a normal variety. A \emph{foliation} on $X$ is a coherent sheaf $\CF\subset T_X$ such that
\begin{enumerate}
    \item $\CF$ is saturated in $T_X$, i.e. $T_X/\CF$ is torsion free, and
    \item $\CF$ is closed under the Lie bracket.
\end{enumerate}

The \emph{rank} of the foliation $\CF$ is the rank of $\CF$ as a sheaf and is denoted by $\rk\CF$. The \emph{canonical divisor} of $\CF$ is a divisor $K_\CF$ such that $\mathcal{O}_X(-K_{\mathcal{F}})\cong\mathrm{det}(\CF)$. If $\CF=0$, then we say that $\CF$ is a \emph{foliation by points}. Given any dominant map $h: Y\dashrightarrow X$, we denote by $h^{-1}\CF$ the \emph{pullback} of $\CF$ on $Y$ as constructed in \cite[3.2]{Dru21} and say that $h^{-1}\CF$ is \emph{induced by} $\CF$. Given any birational map $g: X\dashrightarrow X'$, we denote by $g_*\CF:=(g^{-1})^{-1}\CF$ the \emph{pushforward} of $\CF$ on $X'$. We say that $\CF$ is an \emph{algebraically integrable foliation} if there exists a dominant map $f: X\dashrightarrow Z$ such that $\CF=f^{-1}\CF_Z$, where $\CF_Z$ is the foliation by points on $Z$, and we say that $\CF$ is \emph{induced by} $f$. 

A subvariety $S\subset X$ is called \emph{$\CF$-invariant} if for any open subset $U\subset X$ and any section $\partial\in H^0(U,\CF)$, we have $\partial(\mathcal{I}_{S\cap U})\subset \mathcal{I}_{S\cap U}$, 
where $\mathcal{I}_{S\cap U}$ is the ideal sheaf of $S\cap U$. For any prime divisor $P$ on $X$, we denote $\epsilon_{\CF}(P):=1$ if $P$ is not $\CF$-invariant and $\epsilon_{\CF}(P):=0$ if $P$ is $\CF$-invariant. For any prime divisor $E$ over $X$, we define $\epsilon_{\CF}(E):=\epsilon_{\CF_Y}(E)$ where $h: Y\dashrightarrow X$ is a birational map such that $E$ is on $Y$ and $\CF_Y:=h^{-1}\CF$. For any $\bR$-divisor $D$ on $X$, we define $$D^\CF:=\sum \epsilon_{\CF}(E)E.$$ where the sum runs over the components of $D$. 
\end{defn}

\begin{rem}
    Let $f:X\to Z$ be an equi-dimensional morphism with reduced fibres, $\CF$ is the foliation induced by $f$, then $K_\CF\sim K_{X/Z}$ by \cite[Notation 2.7 and \S2.9]{Dru17}. In the rest of this paper, we will use this fact several times.
\end{rem}

\begin{defn}[Foliated triples]\label{defn: foliated triple}
 A \emph{foliated triple} $(X,\CF,B)$ consists of a normal quasi-projective variety $X$, a foliation $\CF$ on $X$, and an $\bR$-divisor $B\geq 0$ on $X$ such that $K_{\CF}+B$ is $\mathbb R$-Cartier. If $\CF$ is algebraically integrable, then we say that $(X,\CF,B)$ is algebraically integrable. If $X$ is $\bQ$-factorial, then we say that $(X,\CF,B)$ is $\bQ$-factorial.
\end{defn}

\begin{defn}[Singularities]\label{defn: foliation singularity}
Let $(X,\CF,B)$ be a foliated triple. For any prime divisor $E$ over $X$, let $f: Y\rightarrow X$ be a projective birational morphism such that $E$ is on $Y$, and suppose that
$$K_{\CF_Y}+B_Y:=f^*(K_\CF+B)$$
where $\CF_Y:=f^{-1}\CF$. We define $a(E,\CF,B):=-\mu_EB_Y$ to be the \emph{discrepancy} of $E$ with respect to $(X,\CF,B)$.
We say that $(X,\CF,B)$ is \emph{lc} (resp. \emph{klt}) if $a(E,\CF,B)\geq -1$ (resp. $>-1$) for any prime divisor $E$ over $X$. We say that $(X,\CF,B)$ is \emph{canonical} if $a(E,\CF,B)\geq 0$ for any prime divisor $E$ that is exceptional$/X$.
\textbf{Note that our definition of klt aligns with} \cite{LMX24} \textbf{and is different from
other references.}
 
 An \emph{lc place} of $(X,\CF,B)$ is a prime divisor $E$ over $X$ such that $a(E,\CF,B)=-\epsilon_{\CF}(E)$. An \emph{lc centre} of $(X,\CF,B)$ is the centre $V$ of an lc place of $(X,\CF,B)$ on $X$.
\end{defn}

\subsection{Special algebraically integrable foliations}

\begin{defn}[{Foliated log smooth, \,cf. \cite[\S 3.2]{ACSS21}}] Let $(X,\CF,B)$ be a foliated triple. We say that $(X,\CF,B)$ is \emph{foliated log smooth} if there exists a contraction $f:X\to Z$ satisfying the following conditions
\begin{enumerate}
    \item $\CF$ is induced by $f$,
    \item $(X,\Sigma_X)$ is a quasi-smooth toroidal couple for some divisor $\Sigma_X$ such that $\Supp B\subset\Sigma_X$,
    \item there exists a log smooth couple $(Z,\Sigma_Z)$ such that $$f:(X,\Sigma_X)\to (Z,\Sigma_Z)$$ is an equi-dimensional toroidal contraction.
\end{enumerate}
We say that $f:(X,\Sigma_X)\to (Z,\Sigma_Z)$ is \emph{associated with} $(X,\CF,B)$.  
\end{defn}

\begin{rem}\label{fol log sm is lc}
 If $(X,\CF,B)$ is a foliated log smooth triple, then by \cite[Lemma 3.1]{ACSS21}, $(X,\CF,B^{\CF})$ is lc.
\end{rem}

\begin{defn}\label{def: fol log res}
Let $(X,\CF,B)$ be an algebraically integrable foliated triple. A \emph{foliated log resolution} of $(X,\CF,B)$ is a projective birational morphism $h:X'\to X$ such that $$(X',\CF':=h^{-1}\CF,B':=h_*^{-1}B+\Exc(h))$$
is foliated log smooth. The existence of a foliated log resolution for any such $(X,\CF,B)$ is guaranteed by \cite[Lemma 6.2.4]{CHLX23}.
\end{defn}



We refer to \cite[Definition 5.4.2, 7.2.2, 7.2.3]{CHLX23} for the technical definition of ACSS foliated triples. We only need the following Lemma \ref{ACSS model exist} regarding the existence of ACSS model.
\begin{defn}[ACSS model, {cf. \cite[Definition 7.4.1]{CHLX23}}]
    Let $(X,\CF,B)$ be an lc algebraically integrable foliated triple. An \emph{ACSS modification} of $(X,\CF,B)$ is a birational morphism $h:X'\to X$ such that
    \begin{enumerate}
        \item $(X',\CF':=h^{-1}\CF,B':=h_*^{-1}B+\Exc(h)^{\CF'}$ is ACSS,
        \item $X'$ is klt, and
        \item for any $h$-exceptional prime divisor $E$, we have $a(E,\CF,B)=-\epsilon_{\CF}(E).$
    \end{enumerate}
We say that $(X',\CF',B')$ is an ACSS model of $(X,\CF,B)$. 
\end{defn}

\begin{lem}[{cf. \cite[Theorem 2.5.1]{CHLX23} and \cite[Theorem 3.10]{ACSS21}}]\label{ACSS model exist}
    Let $(X, \CF,B)$ be an lc algebraically integrable foliated triple. Then $(X,\CF,B)$ has a $\bQ$-factorial ACSS model.
\end{lem}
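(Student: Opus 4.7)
The plan is to construct an ACSS model as the output of a suitable relative MMP starting from a foliated log resolution. First I would invoke the existence of a foliated log resolution (Definition \ref{def: fol log res}) to obtain a projective birational morphism $h_0 : X_0 \to X$ such that $(X_0, \CF_0, B_0)$ is foliated log smooth, where $\CF_0 := h_0^{-1}\CF$ and $B_0 := (h_0)_*^{-1}B + \Exc(h_0)$. By construction $X_0$ carries a quasi-smooth toroidal structure and is therefore $\bQ$-factorial (Remark \ref{quasi-sm iff q-factorial}), while $(X_0, \CF_0, B_0^{\CF_0})$ is lc by Remark \ref{fol log sm is lc}.

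The next step is to contract those $h_0$-exceptional divisors that violate the ACSS discrepancy condition $a(E,\CF,B) = -\epsilon_{\CF_0}(E)$. Writing the pullback as
$$h_0^{*}(K_\CF + B) = K_{\CF_0} + (h_0)_*^{-1}B - \sum_{E} a(E,\CF,B)\, E,$$
where $E$ ranges over $h_0$-exceptional prime divisors, the lc hypothesis on $(X,\CF,B)$ gives $-a(E,\CF,B) \le \epsilon_{\CF_0}(E)$ for each such $E$. Set
$$B_0^{\flat} := (h_0)_*^{-1}B + \sum_E \epsilon_{\CF_0}(E)\, E, \qquad F := \sum_E \bigl(\epsilon_{\CF_0}(E) + a(E,\CF,B)\bigr)\, E.$$
Then $F \ge 0$ is $h_0$-exceptional, $K_{\CF_0} + B_0^{\flat} = h_0^{*}(K_\CF + B) + F$, and the support of $F$ is exactly the set of divisors we wish to contract. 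I would then run a $(K_{\CF_0} + B_0^{\flat})$-MMP over $X$, using the MMP machinery for algebraically integrable foliations developed in \cite{ACSS21, CHLX23}. Since $K_{\CF_0} + B_0^{\flat}$ is relatively $\bR$-linearly equivalent over $X$ to the effective $h_0$-exceptional divisor $F$, this MMP contracts exactly the components of $\Supp F$ and terminates at a birational morphism $h : X' \to X$; on the output, $X'$ is $\bQ$-factorial, $(X', \CF', B')$ is ACSS, and every $h$-exceptional divisor satisfies $a(E,\CF,B) = -\epsilon_{\CF'}(E)$, yielding the desired $\bQ$-factorial ACSS model.

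The main obstacle lies in justifying that the MMP above can be run, that it terminates, and that both the ACSS property and $\bQ$-factoriality are preserved at every divisorial contraction and flip. The ACSS condition encodes a delicate compatibility between the foliation and an equi-dimensional toroidal morphism to the base of the algebraically integrable fibration, so checking that it persists after each MMP step requires a careful analysis of how divisorial contractions and flips interact with the ambient toroidal geometry. This is precisely the content of the deep theorems \cite[Theorem 2.5.1]{CHLX23} and \cite[Theorem 3.10]{ACSS21}, which I would invoke as the essential input.
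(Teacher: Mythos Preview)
The paper does not actually give a proof of this lemma: it is stated as a direct citation of \cite[Theorem 2.5.1]{CHLX23} and \cite[Theorem 3.10]{ACSS21}, with no argument supplied. Your sketch is a reasonable outline of the strategy underlying those references---take a foliated log resolution, set up the discrepancy bookkeeping so that $K_{\CF_0}+B_0^\flat \equiv_X F\ge 0$ exceptional, and run a relative foliated MMP to contract $\Supp F$---and you correctly identify that the substantive work (existence and termination of the MMP, preservation of the ACSS/Property~$(*)$ structure, and $\bQ$-factoriality) is exactly what those theorems provide.

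One small omission: the definition of an ACSS model in the paper also requires that $X'$ be klt (condition~(2)), and your sketch does not address this. It is handled in the cited references, but since you are unpacking the argument it would be worth noting that the foliated log smooth model has klt underlying variety (it is quasi-smooth toroidal, hence has quotient singularities) and that this is preserved along the MMP.
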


\section{DCC of volumes of triples with a fixed birational model}\label{section: dcc for fixed model}
In this section, we treat a special case of Theorem \ref{main thm last} for foliated triples that are birational to a fixed model (cf. Proposition \ref{dcc of fixed model}). The proof here generally follows \cite[Proposition 4.2]{Bir21}, with some necessary modifications. For the notion of b-divisors we refer to \cite{BZ16}.

\begin{lem}[{cf. \cite[Lemma 3.18]{CS25}}]\label{subadjunction}Assume that $(X,\mathcal{F},B)$ is a projective lc foliated log smooth triple, associated with $f:(X,\Sigma_X)\rightarrow (Z,\Sigma_Z)$. Let $V$ be an lc centre of $(X,\CF,B)$ which is horizontal$/Z$. Then the adjunction formula $K_{\CF_V}+B'_V=(K_{\CF}+B)|_V$ holds, and $(V,\CF_V,B'_V)$ is lc, where $\CF_V$ is the restricted foliation of $\CF$ on $V$. Moreover, if $V$  is minimal with respect to inclusion among the lc centres horizontal$/Z$, then $(V,\CF_V,B'_V)$ is klt.
\end{lem}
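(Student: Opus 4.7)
The plan is to reduce to a one-step adjunction along a single horizontal$/Z$ prime divisor of coefficient one, and then iterate. As a preliminary observation, I would note that since $(X,\CF,B)$ is lc and foliated log smooth, $B$ can only have horizontal$/Z$ components: a vertical prime is $\CF$-invariant with $\epsilon_\CF=0$, so the lc condition forces its coefficient to be $\le 0$, hence $=0$. Combined with the toroidal structure, this shows that the horizontal$/Z$ lc centres of $(X,\CF,B)$ are precisely the irreducible components of intersections $D_1\cap\cdots\cap D_k$, where each $D_i$ is a horizontal$/Z$ component of $\Sigma_X$ with $\mu_{D_i}B=1$. I would write the given $V$ in this form and proceed by induction on $k=\mathrm{codim}_X V$.

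The inductive step reduces to the case $V=D$ for a single horizontal$/Z$ prime divisor $D$ in $B$ with $\mu_D B=1$. By Lemma \ref{lem: restriction of toroidal}, the restriction $f_D:(D,\Sigma_D)\to (Z,\Sigma_Z)$ is flat toroidal, where $\Sigma_D$ is defined by the toroidal adjunction $K_D+\Sigma_D=(K_X+\Sigma_X)|_D$. The link of a ray in a simplicial fan is simplicial, so $(D,\Sigma_D)$ is quasi-smooth toroidal, and the restricted foliation $\CF_D$ coincides with the foliation induced by $f_D$. Writing $B=D+B_1$ with $\mu_D B_1=0$, the formula to establish is
\begin{equation*}
(K_\CF+B)|_D = K_{\CF_D}+B_1|_D,
\end{equation*}
which reduces to the one-step identity $(K_\CF+D)|_D=K_{\CF_D}$. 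Away from the critical locus of $f$ this follows from $K_\CF=K_{X/Z}$ together with classical adjunction $(K_X+D)|_D=K_D$ and $f^*K_Z|_D=f_D^*K_Z$; the vertical ramification correction $K_\CF-K_{X/Z}$ is supported on $\Sigma_X^v$, and since $D$ is horizontal, it restricts cleanly to the corresponding correction for $f_D$ on $D$.

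For the lc claim I would check that $(D,\CF_D,B_1|_D)$ is itself foliated log smooth via $f_D$: the support of $B_1|_D$ lies in $\Sigma_D$, and every component of $B_1|_D$ is the restriction of a horizontal$/Z$ component of $\Sigma_X$, hence horizontal$/Z$ and non-invariant for $\CF_D$. Therefore $(B_1|_D)^{\CF_D}=B_1|_D$, and Remark \ref{fol log sm is lc} yields lc. Iterating, $(V,\CF_V,B'_V)$ is lc. For the klt claim with $V$ minimal among horizontal$/Z$ lc centres, no horizontal component of $B$ outside $\{D_1,\dots,D_k\}$ containing $V$ can have coefficient $1$; otherwise intersecting it with $V$ would yield a strictly smaller horizontal lc centre, contradicting minimality. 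Hence every coefficient of $B'_V$ is strictly less than $1$, and combining this with $B'_V=(B'_V)^{\CF_V}$ and the foliated log smoothness of $(V,\CF_V,B'_V)$ forces it to be klt.

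The main obstacle is the one-step identity $(K_\CF+D)|_D=K_{\CF_D}$ in the possibly non-reduced-fibre toroidal setting, where $K_\CF-K_{X/Z}$ and $K_{\CF_D}-K_{D/Z}$ must be matched after restriction to $D$; this compatibility hinges crucially on $D$ being horizontal so that only vertical correction terms appear, which restrict to $D$ in the expected way via Lemma \ref{lem: restriction of toroidal}.
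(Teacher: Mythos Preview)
Your overall architecture---iterate a one-step foliated adjunction along a horizontal component of $\lfloor B\rfloor$, then deduce lc from foliated log smoothness and klt from minimality---is exactly the strategy the paper follows. The paper, however, outsources the one-step adjunction to \cite[Theorem~6.6.1]{CHLX23} and the identification $B'_V=B_V^{\CF_V}$ to \cite[Lemma~3.18]{CS25}, and for klt compares with the ordinary pair $(V,B_V)$, which is klt because $V$ is a minimal lc centre of $(X,B)$.

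The gap in your write-up is the one-step identity $(K_\CF+D)|_D=K_{\CF_D}$. This is false in the quasi-smooth (non-smooth) toroidal setting. Your argument correctly handles the vertical ramification correction $K_\CF-K_{X/Z}$, but it silently assumes classical adjunction $(K_X+D)|_D=K_D$, which fails along the singular locus of $X$: one has instead $(K_X+D)|_D=K_D+\mathrm{Diff}_D(0)$, and $\mathrm{Diff}_D(0)$ can have horizontal$/Z$ components. Concretely, take $Z$ a point, $X$ the $A_1$ surface $\{xy=z^2\}$, $\Sigma_X=D_1+D_2$ the two toric rulings; then $(K_\CF+D_1)|_{D_1}=K_{D_1}+\tfrac12[0]\neq K_{D_1}$. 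So the correct formula is $(K_\CF+B)|_D=K_{\CF_D}+B'_D$ with $B'_D$ equal to the non-invariant part of the ordinary different $B_D$, not the naive restriction $B_1|_D$. This is precisely what the paper extracts from \cite[Lemma~3.18]{CS25}.

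Once you incorporate the different, your klt argument survives: the extra terms have standard coefficients $1-\tfrac1m<1$, and minimality of $V$ still rules out coefficient-one contributions from horizontal components meeting $V$ (you wrote ``containing $V$'', but in a toroidal stratification a further boundary component can only meet $V$ transversally, so ``intersecting $V$'' is what you need). Thus all coefficients of $B'_V$ are $<1$, and combined with foliated log smoothness of $(V,\CF_V,B'_V)$ you get klt. The paper reaches the same conclusion by the inequality chain $a(D,\CF_V,B'_V)=a(D,V,B'_V)\ge a(D,V,B_V)>-1$ for non-invariant $D$, which is a cleaner way to package the same content.
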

\begin{proof}
By assumption, there exists a divisor $E$ over $X$ such that $a(X,\CF,B)=-1$ and the centre of $E$ on $X$ is $V$. Since $V$ is horizontal$/Z$, we have $a(E,X,B)=a(X,\CF,B)=-1$. Therefore $E$ is toroidal with respect to $(X,\Sigma_X)$ (cf. Notation \ref{toroidal extraction}) and $V$ is a stratum of $\Sigma^{\CF}_X$. 
Let $\CF_V$ be the restricted foliation of $\CF$ on $V$ which is induced by $V\to Z$. By inductively using \cite[Theorem~6.6.1]{CHLX23}, we obtain the adjunction formula $K_{\CF_V}+B'_V=(K_{\CF}+B)|_V$ such that $(V,\CF_V,B'_V)$ is lc.
 
Suppose $V$ is minimal among the lc centres of $(X,\CF,B)$ horizontal$/Z$. Since all components of $B$ are horizontal$/Z$, $V$ is a minimal lc centre of $(X,B)$. By considering the adjunction formula $K_V+B_V=(K_X+B)|_V$ as defined in Notation \ref{strict toroidal}, we see that $(V,B_V)$ is klt. By inductively applying \cite[Lemma 3.18]{CS25}, we have $B'_V=B^{\CF_V}_V$. Therefore, $a(D,\CF_V,B_V')=a(D,V,B'_V)\ge a(D,V,B_V)>-1$ for every prime divisor $D$ over $V$ with $\epsilon_{\CF_V}(D)=1$. Consequently, $(V,\CF_V,B'_V)$ is klt.
\end{proof}

\begin{prop}\label{dcc of fixed model}
Let $I \subset \mathbb{R}^{\geq0}$ be a DCC set. Assume that $(Y,\mathcal{G},\Delta)$ is a projective foliated log smooth triple associated with $f:(Y,\Sigma_Y)\rightarrow (Z,\Sigma_Z)$ such that $\Delta=\Sigma^{\CG}_Y$. Let $\CS$ be the set of foliated triples $(X,\mathcal{F},B)$ equipped with a birational morphism $\phi:X\rightarrow Y$ satisfying
\begin{enumerate}
    \item $(X,\CF,B)$ is projective lc,
    \item the coefficients of B are in $I$,
    \item $\phi_{*}B\leq \Delta$, and $\phi_*\CF=\CG.$
\end{enumerate}
Then the set \begin{equation*}
	\{\vol(X,K_{\CF}+B)\,|\,(X,\mathcal{F},B)\in \CS\}
\end{equation*}
satisfies the DCC.
\end{prop}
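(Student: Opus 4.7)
The plan is to adapt the strategy of \cite[Proposition 4.2]{Bir21} to the foliated setting, using Lemma \ref{subadjunction} in place of the classical adjunction at lc centres. The key idea is to transfer the DCC problem from the varying models $X$ onto the fixed foliated log smooth $(Y,\CG,\Delta)$ via a volume identity, and then exploit the DCC structure of divisor coefficients on the fixed $Y$.

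First I would record the following coefficient bounds for any $(X,\CF,B)\in\CS$ with $\phi\colon X\to Y$: the lc condition on $(X,\CF,B)$ forces $\mu_{E}B\le\epsilon_{\CF}(E)$ on every $\phi$-exceptional prime $E$, and the hypothesis $\phi_{*}B\le\Delta$ forces $\mu_{D}B\le 1$ on every non-exceptional prime $D$ with $\phi(D)\subset\Supp\Delta$; hence $B\le\phi_{*}^{-1}\Delta+\Exc(\phi)^{\CF}$. Monotonicity of lc then gives that $(Y,\CG,\phi_{*}B)$ is lc. A direct discrepancy computation shows that on each $\phi$-exceptional prime $E$ the coefficient of $K_{\CF}+B+\Exc(\phi)-\phi^{*}(K_{\CG}+\phi_{*}B)$ equals $a(E,\CG,\phi_{*}B)+\mu_{E}B+1\ge 0$, while on non-exceptional primes the coefficient vanishes. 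The difference is therefore effective and $\phi$-exceptional, so
\[
\vol(X,K_{\CF}+B)\;\le\;\vol(X,K_{\CF}+B+\Exc(\phi))\;=\;\vol(Y,K_{\CG}+\phi_{*}B).
\]

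The core technical step is to upgrade this to an equality. For this I would pass to a $\bQ$-factorial ACSS modification of $(X,\CF,B)$ via Lemma \ref{ACSS model exist}, and use Lemma \ref{subadjunction} to restrict $K_{\CF}+B$ to a minimal horizontal lc centre $V$ of the foliated log smooth structure associated to $(Y,\CG,\Delta)$. On such a $V$, foliated adjunction reduces the foliated volume computation to a classical log-pair volume on $V$, where monotonicity and continuity of volume in the boundary coefficients can be invoked. Combined with the foliated MMP machinery of \cite{CHLX23}, this would yield the identity
\[
\vol(X,K_{\CF}+B)=\vol(Y,K_{\CG}+\phi_{*}B).
\]
Granting this identity, DCC follows from the fixed-model side. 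The pushforwards $\phi_{*}B$ form a family of effective divisors on $Y$ supported on the finitely many components of the fixed $\Delta$ with coefficients in the DCC set $I$. By DCC of $I$ and finiteness of the component set, every sequence $\phi_{i*}B_{i}$ admits a subsequence with componentwise non-decreasing coefficients; monotonicity of the volume function in the boundary then forces the corresponding subsequence $\vol(Y,K_{\CG}+\phi_{i*}B_{i})$ to be non-decreasing, ruling out strict descent. The identity then transfers DCC to $\{\vol(X,K_{\CF}+B)\,|\,(X,\CF,B)\in\CS\}$.

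The main obstacle will be the reverse inequality underlying the volume identity. In the classical pair setting a birational modification can genuinely strip the volume, and the foliated setting introduces the further subtlety that $\CG$-invariant and $\CG$-non-invariant exceptional divisors contribute $\epsilon_{\CF}=0$ versus $1$ to the discrepancy bound. Controlling these contributions uniformly across $\CS$ requires the foliated log-smoothness of $(Y,\CG,\Delta)$, the ACSS framework, and the foliated adjunction of Lemma \ref{subadjunction} in concert; navigating this bookkeeping uniformly is the delicate technical heart of the argument.
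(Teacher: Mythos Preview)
Your proposal rests on the claimed identity $\vol(X,K_{\CF}+B)=\vol(Y,K_{\CG}+\phi_{*}B)$, and this identity is false. Take $Z$ to be a point, so that $\CG=T_Y$ and the foliated triple is just a log pair; let $Y=\mathbb{P}^2$, $\Sigma_Y=\Delta=L_1+L_2+L_3+L_4$ four general lines, so $(Y,\CG,\Delta)$ is foliated log smooth and $K_{\CG}+\Delta\sim H$ has volume $1$. Blow up $p=L_1\cap L_2$ to get $\phi\colon X\to Y$ with exceptional curve $E$, and set $B=L_1'+L_2'+L_3'+L_4'$ (coefficients in $I=\{1\}$). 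Then $(X,\CF,B)$ is lc, $\phi_{*}B=\Delta$, and $K_{\CF}+B=K_X+B=\phi^{*}H-E$, which has self-intersection $0$, so $\vol(X,K_{\CF}+B)=0\neq 1=\vol(Y,K_{\CG}+\phi_{*}B)$. More generally, adding $cE$ to $B$ with $c\in[0,1)$ gives volume $1-(1-c)^2<1$. Your ``core technical step'' therefore cannot be carried out, and the sketch you give for it (restricting to a minimal horizontal lc centre via Lemma \ref{subadjunction}) does not address global volume at all---adjunction to $V$ controls $(K_{\CF}+B)|_V$, not $\vol(X,K_{\CF}+B)$.

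The paper's proof works precisely because it does \emph{not} attempt such an identity. Instead it runs an MMP$/Y$ to obtain $K_{\CF_i}+B_i\le\phi_i^{*}(K_{\CG}+C_i)$, then records the exceptional coefficients in a sequence of b-divisors $\mathbf{M}_i$ with a limit $\mathbf{C}$. The substantive work is a weight argument (Steps 4--7): one repeatedly replaces $(Y,\CG,\Delta)$ by toroidal extractions of divisors $D$ whose b-coefficient $\mu_D\mathbf{C}$ is strictly smaller than $-a(D,\CG,C)$, tracking a lexicographic weight $(p,r,l,d)$ built from lc centres horizontal$/Z$ and their adjunction data (this is where Lemma \ref{subadjunction} actually enters). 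Only after this terminates does one get, for each $t<1$, the reverse comparison $K_{\CF_i}+B_i\ge\phi_i^{*}(K_{\CG}+tC)$ for $i\gg0$, and the contradiction comes from sandwiching $v=\lim v_i$ between $\vol(Y,K_{\CG}+tC)$ and $\vol(Y,K_{\CG}+C)$. Your proposal skips this entire mechanism, which is the heart of the argument.
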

\begin{proof}
\noindent\textbf{Step 1}. By assumption and Remark \ref{fol log sm is lc}, $(Y,\CG,\Delta)$ is lc. Since $I$ is DCC, its closure $\Bar{I}$ is also DCC. By replacing $I$ with $\Bar{I}\cup\{1\}$, we may assume that $I$ is closed and $1\in I$. If the proposition does not hold, then there is a sequence $(X_i,\CF_i,B_i)$ with $\phi_i:X_i\rightarrow Y$ in $S$ such that the volumes $v_i=\vol(X_i,K_{\CF_i}+B_i)$ form a strictly decreasing sequence of positive real numbers.  Let $v=\lim_{i\to\infty}v_i$.

\medskip
\noindent\textbf{Step 2}. By Lemma \ref{ACSS model exist}, we can replace $(X_i,\CF_i,B_i)$ and assume that it is $\bQ$-factorial ACSS. By \cite[Theorem 9.4.1]{CHLX23}, after running a $(K_{\CF_i}+B_i)$-MMP$/Y$ with scaling of an ample divisor, we reach a model on which the pushdown of $K_{\CF_i}+B_i$ is a limit of movable$/Y$ $\bR$-divisors. Replace $X_i$ with that model. By the general negativity lemma \cite[Lemma 3.3]{Bir12}, we have$$K_{\CF_i}+B_i+G_i=\phi_i^*(K_{\CG}+C_i)$$ for some $G_i\geq0$ where $C_i=\phi_{i*}B_i$. Since $C_i\leq\Delta$, the discrepancies satisfy $$a(D,\CG,\Delta)\leq a(D,\CG,C_i)=a(D,\CF_i,B_i+G_i)<0.$$ 
for every component $D$ of $B_i+G_i$. Since $(Y,\CG,\Delta)$ is lc, it follows that $\epsilon_{\CF_i}(D)=1$. Thus, $a(D,Y,\Sigma_Y)\le a(D,Y,\Delta)=a(D,\CG,\Delta)<0$. Therefore $D$ is toroidal with respect to $(Y,\Sigma_Y)$ and its centre on $Y$ is a stratum of $\Delta=\Sigma^{\CG}_Y$.

\medskip
\noindent\textbf{Step 3}. For each $i$, let $\mathbf{M}_i$ be the b-divisor as follows: its trace on $X_i$ is $\mathbf{M}_{i,{X_i}}=B_i$. For any prime divisor $E$ exceptional$/X$, its coefficient is 1 if $\epsilon_{\CF_i}(E)=1$, and is 0 otherwise. The set of all prime divisors $D$ over $Y$ such that 
$0<\mu_D\mathbf{M}_i<1$ for some $i$ is countable, as for each fixed $i$ there are only finitely many $D$ 
with $0<\mu_D\mathbf{M}_i<1$. Therefore, since $I$ is a DCC set, by using a standard diagonalization argument, we may assume that for each prime divisor $D/Y$, the sequence $\mu_D \mathbf{M}_i$ is an increasing sequence for $i\gg 0$. 
Thus, we can define a limiting b-divisor $\mathbf{C}$ by setting $\mu_D\mathbf{C}=\lim_i \mu_D \mathbf{M}_i$
for each $D$. By \textbf{Step 1}, the coefficients of $\mathbf{C}$ are in $I$.

\medskip
\noindent\textbf{Step 4}. Note that $(Y,\Sigma_Y)$ is a quasi-smooth toroidal couple. Let $D$ be a toroidal divisor with respect to $(Y,\Sigma_Y)$ whose centre of $D$ on $Y$ is a stratum of $\Delta$. By Notation \ref{toroidal extraction}, let $p:Y'\to Y$ be the toroidal extraction of $D$. Define $\Sigma_{Y'}$ and $\Delta'$ by $K_{Y'}+\Sigma_{Y'}=p^*(K_Y+\Sigma_Y)$ and $K_{Y'}+\Delta'=p^*(K_Y+\Delta)$. By Lemma \ref{extract hor wss}, $(Y',\CG',\Delta')$ is also foliated log smooth, associated with $f':(Y',\Sigma_{Y'})\to (Z,\Sigma_Z)$, where $\CG'=p^{-1}(\CG)$.

For each $i$, let $W$ be a foliated log resolution of $(X_i,\CF_i,B_i)$ such that the induced map $W\dasharrow Y'$ is a morphism. Consider the new foliated triple $(W,\CF_W,\mathbf{M}_{i,W})$ where $\CF_W$ is the pullback of $\CF_i$ on $W$. This triple is lc, $\bQ$-factorial ACSS, and it satisfies $\vol(W,K_{\CF_W}+\mathbf{M}_{i,W})=\vol(X_i,K_{\CF_i}+B_i)$. After running a $(K_{\CF_W}+\mathbf{M}_{i,W})$-MMP$/Y'$ as in \textbf{Step 2}, we obtain a model $(X_i',\CF_i',B_i')$ over $Y'$ such that $K_{\CF'_i}+B'_i$ is a limit of movable$/Y'$ $\bR$-divisors. Here, $B_i'$ denotes the pushdown of $\mathbf{M}_{i,W}$, 
that is, $B_i'=\mathbf{M}_{i,X_i'}$. By the generalized negativity lemma \cite[Lemma 3.3]{Bir12}, the map $X_i\bir X_i'$ does not contract any divisor. Thus, the b-divisors $\mathbf{M}_i$ and their limit $\mathbf{C}$ are preserved. Let $\phi_i':X_i'\to Y'$ be the natural morphism, and note that $$v_i=\vol(X'_i,K_{\CF_i'}+B_i')$$

In the subsequent steps, we will replace $(Y,\CG,\Delta)$ with $(Y',\CG',\Delta')$ and replace $(X_i,\CF_i,B_i)$ with $(X_i',\CF_i',B_i')$ whenever necessary.

\medskip
\noindent\textbf{Step 5}. Let $C:=\mathbf{C}_Y=\lim_i C_i$. Let $\mathcal{D}_{<}(\CG,C)$ be the set of prime divisors $D$ exceptional$/Y$, such that
$$
 \ \ \ \mu_D\mathbf{C}< -a(D,\CG,C). \ \ \ \ \ \ \ \ \ 
$$  

Note that each $D\in \mathcal{D}_{<}(\CG,C)$ is $\CG$-non-invariant, toroidal with respect to $(Y,\Sigma_Y)$, and the centre of $D$ is a stratum of $\Delta$. Let $p:Y'\to Y$ be as in \textbf{Step 4} and let $C':=\mathbf{C}_{Y'}$. Then we have $K_{\CG'}+C'< p^*(K_\CG+C)$. In particular,$$
\mathcal{D}_{<}(\CG',C')\subsetneq\mathcal{D}_{<}(\CG,C)
$$ 
as $D$ does not belong to the former set. 

\medskip
\noindent\textbf{Step 6}. We associate a weight $w=(p,r,l,d)$ to $(Y,\CG,C)$ as follows. 
We first define the weight $w_V$ for each lc centre $V$ of $(Y,\CG,C)$ which is horizontal$/Z$. Note that each such $V$ is a stratum of $\Delta$. Let $r$ be the codimension of $V$. We have an adjunction formula $K_{\CG_V}+C_V=(K_{\CG}+C)|_V$ by Lemma \ref{subadjunction}, such that $(V,\CG_V,C_V)$ is lc,  where $\CG_V$ is the restricted foliation on $V$. Let $l$ be the number of prime  exceptional divisors $S$ over $V$ mapping into the klt locus of $(V,\CG_V,C_V)$ and satisfying $a(S,\CG_V,C_V)<0$. Then $l<+\infty$. Note that each such $S$ is $\CG_V$-non-invariant, and the klt locus of $(V,\CG_V,C_V)$ is always non-empty. Let $d$ be the sum of coefficients of $C_V$. Put $w_V=(r,l,d)$.

Now we define the weight $w$ of $(Y,\CG,C)$. Let $\mathcal{V}(\CG,C)$ be the set of lc
centres $V$ of $(Y,\CG,C)$ which are horizontal$/Z$ and intersect the centre of some $D\in \mathcal{D}_{<}(\CG,C)$. Let $p$ be the number of elements of $\mathcal{V}(\CG,C)$. If $p=0$, then let $r=0$, let $l$ be the number of elements of $\mathcal{D}_{<}(\CG,C)$, which is finite in this case, and let $d$ be the sum of the coefficients of $C$. If $p>0$, then choose $V\in \mathcal{V}(\CG,C)$ with maximal weight $w_V=(r,l,d)$ with respect to the lexicographic order on 3-tuples. In any case, let $w:=(p,r,l,d)$.

Note that $p,r,l\in\mathbb{N} \cup \{0\}$ while $d$ belongs to a DCC set $I'$ depending only on $I$ because the coefficients of $C_V$ belong to a DCC set $J$ depending only on $I$ (cf. \cite[Theorem~2.4.3]{CHLX23}). We consider the  lexicographic order on the weights $w$.

\medskip
\noindent\textbf{Step 7}. Assume that $(p,r,l,d)$ is the weight of $(Y,\CG,C)$. Suppose $(p,r,l)\neq (0,0,0)$. If $p=0$, then $r=0$ but $l\neq 0$, so there is $D\in \mathcal{D}_{<}(Y,C)$ with its centre $L$ 
contained in the klt locus of $(Y,C)$. 
If $p>0$, then $r>0$ and there is $D\in \mathcal{D}_{<}(\CG,C)$ with its centre $L$ intersecting some 
element of $\mathcal{V}(\CG,C)$.
In either case, let $p\colon Y'\to Y$ and $C'$ be as in \textbf{Step 5} constructed for $D$. 
Then each element of $\mathcal{V}(\CG',C')$ maps birationally onto an element of $\mathcal{V}(\CG,C)$.

Let $w'=(p',r',l',d')$ be the weight of $(Y,\CG',C')$ defined similarly as in \textbf{Step 6}. Then $p'\le p$ by the previous paragraph.
If $p=0$, then $p'=r'=0$ but $l'<l$, so $w'<w$. If $p'<p$, then again $w'<w$. 
This situation could happen. For example, if there exists an lc centre $V$ of $(Y,\CG,C)$ which is horizontal$/Z$ and contained in $L$, then $p'<p$. 
In these cases, we replace $(Y,\CG,\Delta)$ with $(Y',\CG',\Delta')$ as in \textbf{Step 4}. 
This decreases the weight of $(Y,\CG,C)$. 

Now assume that $p'=p>0$, in particular, no horizontal$/Z$ lc centre of $(Y,\CG,C)$ is contained in $L$. Assume that $V\in \mathcal{V}(\CG,C)$ is minimal with respect to inclusion 
among the horizontal$/Z$ lc centres intersecting $L$. Let 
$V'\subset Y'$ be its birational transform. Then $V'$ is a horizontal$/Z$ lc centre of $(Y',\CG',C')$. Assume $w_V=(s,m,e)$ and $w_{V'}=(s',m',e')$. 
\begin{claim}\label{claim: weight inequality}
   $w_{V'}<w_V$. 
\end{claim}
\begin{proof}
Clearly $s'=s$. Define
$$
K_{\CG_V}+C_{V}=(K_{\CG}+C)|_{V}\,\, \text{and}\,\, K_{\CG_{V'}'}+C'_{V'}=(K_{\CG'}+C')|_{V'}
$$ 
by adjunction as in Lemma \ref{subadjunction}. Then $(V,\CG_V,C_V)$ is klt near $L\cap V$ by the minimality of $V$ and Lemma \ref{subadjunction}. By construction,  $K_{\CG'}+C'\lneq \pi^*(K_\CG+C)$. Thus, $K_{\CG_{V'}'}+C_{V'}'\lneq \rho^*(K_{\CG_V}+C_{V})$ where $\rho$ denotes the morphism $V'\to V$. The equality holds over the non-klt locus of $(V,\CG_V,C_V)$ as this non-klt locus is disjoint from $L$. Thus $m'\le m$. If $\rho$ contracts a divisor, the centre of this divisor on $V$ is an irreducible component of $L\cap V$, which is a stratum of $\Delta$. Thus, this exceptional divisor is $\CG'_{V'}$-non-invariant, and consequently $m'<m$. If $\rho$ does not contract any divisor, then $\rho$ is small. We deduce that $e'<e$. Thus the claim $w_{V'}<w_{V}$ follows.
\end{proof}
\noindent\emph{Proof of Proposition \ref{dcc of fixed model} continued.} Assume $U'\in\mathcal{V}(\CG',C')$ and let $U$ be its image on $Y$. Then $U\in\mathcal{V}(\CG,C)$. If $U$ does not intersect $L$, then $w_{U'}=w_U\le w$. If $U$ intersects $L$, then $w_{U'}<w$. Indeed, if $U$ is minimal among the horizontal$/Z$ lc centres intersecting $L$, then $w_{U'}<w_U\le w$ by Claim \ref{claim: weight inequality}. If $U$ is not minimal, then there is $T\in\mathcal{V}(\CG,C)$ intersecting $L$ and $T\subsetneq U$, so $w_{U'}<w_T\le w$.

Now we replace $(Y,\CG,\Delta)$ with $(Y',\CG',\Delta')$ as in \textbf{Step 4} and repeat this step. The above arguments show that eventually we can decrease the weight of $(Y,\CG,\Delta)$ by choosing $D$ appropriately in each step. Since the weight is discrete, after finitely many steps we arrive at the case $p=r=l=0$.

\medskip
\noindent\textbf{Step 8}. From now on we assume $w=(0,0,0,d)$. Equivalently, $\mathcal{D}_{<}(\CG,C)=\emptyset$. If there exists a divisor $D$ exceptional$/Y$ with $a(D,\CG,C)<0$, such that $\mu_D\mathbf{C}= -a(D,\CG,C)$ and its centre on $Y$ is not contained in $\Supp C$, then we replace $(Y,\CG,C)$ with $(Y',\CG',C')$, where the latter is constructed for $D$ as in \textbf{Step 4}. The condition $\mathcal{D}_{<}(\CG,C)=\emptyset$ is preserved. 
Since $(Y,\CG,0)$ is klt outside $\Supp C$, by repeating this finitely many times we can assume that there is no such $D$. 
\begin{claim}\label{claim: inequality}
Suppose $t\in(0,1)$. Then for $i\gg 0$, we have $K_{\CF_i}+B_i\ge \phi_i^*(K_{\CG}+tC)$.   
\end{claim}
\begin{proof}Since $(Y,\CG,tC)$ is klt, there are finitely many prime divisors $D$ over $Y$ with $a(D,\CG,tC)<0$. Note that each such $D$ is toroidal with respect to $(Y,\Sigma_Y)$ and its centre on $Y$ is a stratum of $\Delta$.
It is enough to check the inequality in Claim \ref{claim: inequality} for these $D$. Furthermore, for such $D$ we can assume that $\mu_D B_i$ is sufficiently close to $\mu_D \mathbf{C}_{X_i}$. 

If $D$ is not exceptional$/Y$, then the claim follows from $t<1$ and $i\gg 0$. If $D$ is exceptional$/Y$, then since $\mathcal{D}_{<}(\CG,C)=\emptyset$,
 writing $K_{\CF_i}+A_i=\phi_i^*(K_\CG+C)$, we have $\mu_D \mathbf{C}_{X_i}\ge \mu_DA_i$. Thus,  
$\mu_D \mathbf{C}_{X_i}=\mu_DA_i$ by \textbf{Step 2} as $C_i={\phi_i}_*B_i\le C$. Therefore $D$ satisfies $\mu_D\mathbf{C}=-a(D,\CG,C)$. By the argument in the first paragraph of this step, the centre of $D$ is contained in $\Supp C$.   
The claim then again follows from $t<1$.  
\end{proof}
\noindent\emph{Proof of Proposition \ref{dcc of fixed model} continued.}
By Claim \ref{claim: inequality}, for $i\gg 0$ we get $$v_i=\vol(X_i,K_{\CF_i}+B_i)\geq \vol(Y,K_\CG+tC).$$
Taking the limit as $i\to\infty$, we have $v\geq\vol(Y,K_\CG+tC)$. Now taking the limit as $t\to 1^{-}$, we have  $$v\geq\vol(Y,K_\CG+C)\geq\vol(X_i,K_{\CF_i}+B_i)=v_i.$$
Consequently, $v=v_i$ for $i\gg0$, which is a contradiction to the assumption that $v_i$ is strictly decreasing.
\end{proof}

\section{Deformation invariance of relative log canonical volumes}\label{section4}
In this section, we prove Theorem \ref{inv vol semistable} by establishing Theorem \ref{inv vol}, where the semistability condition is replaced by weak semistability. Note that we do not establish the deformation invariance of $h^0(X_t,m(K_{X_t/Z_t}+B_t))$. Instead, we prove the invariance for an auxiliary adjoint structure (cf. Proposition \ref{inv of adj plurigenera}). This construction allows us to apply the Kawamata--Viehweg vanishing theorem, which in turn is sufficient to deduce the invariance of volumes by taking the limit. 

We start with the following lemma.
\begin{lem}\label{lem: fibre of toroidal}
    Let $(X,\Sigma_X)$ be a quasi-smooth toroidal couple and $T$ a smooth variety. Suppose that $\pi:(X,\Sigma_X)\to T$ is a toroidal contraction. Then for each closed $t\in T$, the fibre $(X_t,\Sigma_{X_t})$ is a quasi-smooth toroidal couple.
\end{lem}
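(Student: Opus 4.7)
The plan is to verify the quasi-smooth toroidal property of $(X_t,\Sigma_{X_t})$ at every closed point $x\in X_t$ using the local toric models provided by the toroidal morphism hypothesis. First, I would fix such $x$ and apply the hypothesis to obtain local toric models $\{(W=X_\sigma,\Sigma_W),w\}$ and $\{(V=X_\tau,\Sigma_V),v\}$ of $\{(X,\Sigma_X),x\}$ and $\{(T,0),t\}$ together with a toric morphism $\pi_{\mathrm{tor}}\colon W\to V$ inducing a compatible square of completed-local-ring isomorphisms. Because $\Sigma_T=0$, compatibility of the ideals forces the ideal of $\Sigma_V$ at $v$ to be the unit ideal, whence $v\in\mathbb{T}_V$. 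Moreover, surjectivity of the lattice map $\phi\colon N_1\to N_2$ can be deduced from $\pi$ being a contraction, since the generic geometric connectedness of the fibres of $\pi$ forces the kernel of the induced torus homomorphism to be connected.

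The key toric computation is that $\sigma\cap K$ (with $K:=\ker\phi$) coincides with the face $\sigma_0$ of $\sigma$ spanned by those rays $\mathbb{R}_{\ge 0}v_i$ with $v_i\in K$. Indeed, if $u=\sum_i t_iv_i\in\sigma\cap K$ with $t_i\ge 0$, then $\sum_i t_i\phi(v_i)=0$ with each $\phi(v_i)\in\tau$; strong convexity $\tau\cap(-\tau)=\{0\}$ applied inductively yields $t_i\phi(v_i)=0$ for every $i$, so $v_i\in K$ whenever $t_i>0$. Since $\sigma$ is simplicial ($W$ being quasi-smooth), its face $\sigma_0$ is simplicial as well. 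Choosing a splitting $N_1\cong K\oplus N_2$ (possible since $\phi$ is surjective and $N_2$ is free), the open toric subvariety $X_{\sigma_0}=\pi_{\mathrm{tor}}^{-1}(\mathbb{T}_V)\subset X_\sigma$ decomposes as a product $X_{\sigma_0}^K\times\mathbb{T}_V$, with $\pi_{\mathrm{tor}}$ becoming the second projection. Consequently the toric fibre $W_v$ is isomorphic to the affine toric variety $X_{\sigma_0}^K$ in the lattice $K$, and $\Sigma_W|_{W_v}$ coincides with its toric boundary: components of $\Sigma_W$ coming from rays of $\sigma$ outside $K$ are vertical over $V\setminus\mathbb{T}_V$ and hence disjoint from $W_v$.

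Finally, quotienting the commutative square of completed local rings by the maximal ideals of $t$ and $v$ yields an isomorphism $\widehat{\mathcal{O}}_{X_t,x}\cong\widehat{\mathcal{O}}_{W_v,w}$ matching the ideals of $\Sigma_{X_t}$ and $\Sigma_{W_v}$, and so exhibits $(W_v,\Sigma_{W_v})=(X_{\sigma_0}^K,\text{toric boundary})$ as a quasi-smooth local toric model of $(X_t,\Sigma_{X_t})$ at $x$. Since $x$ was arbitrary, this proves the lemma; connectedness of the global fibre (contraction hypothesis) together with local normality from the toric models ensures that $X_t$ is actually an irreducible normal variety. The principal technical obstacle is the scheme-theoretic identification of the toric fibre $W_v$ with $X_{\sigma_0}^K$ equipped with its canonical boundary; this rests on the strong-convexity argument (to realize $\sigma\cap K$ as a face) together with the splitting $N_1\cong K\oplus N_2$ (obtained from surjectivity of $\phi$), and on the orbit--cone correspondence to pinpoint which components of $\Sigma_W$ meet $W_v$.
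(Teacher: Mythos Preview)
Your approach---analyzing the local toric models directly to verify that $(X_t,\Sigma_{X_t})$ is a quasi-smooth toroidal couple---is essentially the same as the paper's, though you carry out explicitly the toric fibre computation that the paper outsources to \cite[Lemma 6.2]{Qu25} (for the toroidal embedding structure of the fibre) and then finishes with the same simpliciality argument for quasi-smoothness. Your identification of $\sigma\cap K$ with the face $\sigma_0$ via strong convexity of $\tau$, and of the toric fibre with $X_{\sigma_0}^K$, is the concrete content behind those citations.

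One point deserves correction: your deduction of surjectivity of the lattice map $\phi$ from the contraction hypothesis on $\pi$ is not valid as written. The local toric models are only formal, and a toric morphism like $z\mapsto z^2$ on $\mathbb{C}^*$ (with $\phi$ multiplication by $2$, not surjective) is a perfectly good formal local model near $z=1$ for the identity map, which is certainly a contraction. The repair is painless and does not affect the rest of your argument: replace $N_2$ by the sublattice $\mathrm{im}(\phi)$ and $V$ by the corresponding toric variety $V'$; since $v\in\mathbb{T}_V$ the finite map $V'\to V$ is \'etale near any preimage $v'$ of $v$, so $\widehat{\mathcal{O}}_{V',v'}\cong\widehat{\mathcal{O}}_{V,v}\cong\widehat{\mathcal{O}}_{T,t}$ and the modified model $W\to V'$ now has surjective lattice map, so your splitting $N_1\cong K\oplus N_2$ goes through. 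Alternatively, just observe that without surjectivity the scheme-theoretic fibre $W_v$ is a disjoint union of finitely many copies of $X_{\sigma_0}^K$, and the formal local ring at $w$ sees only the one containing $w$. Either way your local model $\{(X_{\sigma_0}^K,\text{toric boundary}),w'\}$ is correct.
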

\begin{proof}
    Note that the base $T$ is smooth with trivial toroidal structure. Since $\pi$ is a contraction, by \cite[Lemma 6.2]{Qu25}, each $X_t$ is a normal variety, and $(U_{X_t}\subset{X_t})$ defines a toroidal embedding, where $U_{X_t}:=X_t\backslash \Sigma_{X_t}$. By Lemma \ref{lem: restriction of toroidal}, $\Sigma_{X_t}$ is a reduced divisor on $X_t$. Thus, $(X_t,\Sigma_{X_t})$ is a toroidal couple. 
    
    To prove the quasi-smoothness, we consider the local toric models. By the structure of toric morphisms (see, e.g., \cite[Page 133]{Cox11}, the local model of the fibre corresponds to a fibre of the toric morphism between the local toric models. Since $(X,\Sigma_X)$ is quasi-smooth, its local models are simplicial. This implies that the local models of $(X_t,\Sigma_{X_t})$ are also simplicial, ensuring that $(X_t,\Sigma_{X_t})$ is quasi-smooth.
\end{proof}

\begin{defn}[Family of weak semistable morphisms]\label{weak ss family}
	Let $f:X\to Z$ be a contraction over a smooth variety $T$, and let $\Sigma_X$ and $\Sigma_Z$ be divisors on $X$ and $Z$, respectively. We say that $f:(X,\Sigma_X)\to (Z,\Sigma_Z)/T$ is a \emph{family of weak semistable morphisms} if 
 \begin{enumerate}
     \item $f:(X,\Sigma_X)\to(Z,\Sigma_Z)$ is weak semistable (cf. Definition \ref{def: wss}), and
     \item  the structure morphism $Z\to T$ is a contraction and $(Z,\Sigma_Z)$ is log smooth over $T$.
 \end{enumerate}
\end{defn}

\begin{rem}
	Suppose $f:(X,\Sigma_X)\to (Z,\Sigma_Z)/T$ is a family of weak semistable morphisms. Then the induced morphism $\pi:(X,\Sigma_X)\to T$ is toroidal, as it is the composition of toroidal morphisms. By Lemma \ref{lem: fibre of toroidal}, each closed fibre $(X_t,\Sigma_{X_t})$ is a quasi-smooth toroidal couple. Since $(Z,\Sigma_Z)$ is log smooth over $T$ and $Z\to T$ is a contraction, each $(Z_t,\Sigma_{Z_t})$ is a log smooth couple. The induced fibrewise morphism $f_t:(X_t,\Sigma_{X_t})\to (Z_t,\Sigma_{Z_t})$ is clearly flat with reduced fibres. Furthermore, by \cite[Theorem 3.11]{BQ24}, $f_t$ is a toroidal morphism. Therefore, $f_t$ is a weak semistable morphism.
\end{rem}

\begin{lem}\label{extract family}
    Let $\pi:(X,\Sigma_X)\to T$ be a toroidal contraction from a quasi-smooth toroidal couple $(X,\Sigma_X)$ to a smooth curve $T$. Let $E$ be a toroidal divisor with respect to $(X,\Sigma_X)$, and let $p:Y\to X$ be the toroidal extraction of $E$. Let $\Sigma_Y$ be the divisor defined by $K_Y+\Sigma_Y=p^*(K_X+\Sigma_X)$. Suppose that each stratum of $\Sigma_X$ has irreducible fibres over $T$. 
    
    Then for each closed point $t\in T$, the fibrewise morphism $p_t:(Y_t,\Sigma_{Y_t})\to (X_t,\Sigma_{X_t})$ is the toroidal extraction of the toroidal divisor $E_t$.
   \end{lem}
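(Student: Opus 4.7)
The plan is to verify the statement locally using the toric description of toroidal extractions as star subdivisions, and to patch globally using the hypothesis on strata.

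First, since $p:(Y,\Sigma_Y)\to(X,\Sigma_X)$ is a toroidal modification and $\pi:(X,\Sigma_X)\to T$ is a toroidal contraction, the composition $\pi\circ p:(Y,\Sigma_Y)\to T$ is again a toroidal contraction (it is a contraction because $p$ is birational and $\pi$ is a contraction; toroidality is preserved under composition). Applying Lemma \ref{lem: fibre of toroidal} to both $\pi$ and $\pi\circ p$, the fibres $(X_t,\Sigma_{X_t})$ and $(Y_t,\Sigma_{Y_t})$ are quasi-smooth toroidal couples, and $p_t:Y_t\to X_t$ is a well-defined birational morphism (an isomorphism away from $E_t$).

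Second, I would identify $E_t$ in a local toric chart. Let $S$ be the centre of $E$ on $X$, which is a stratum of $\Sigma_X$ by Notation \ref{toroidal extraction}. The hypothesis that $S\to T$ has irreducible fibres ensures that $S_t$ is an irreducible stratum of $\Sigma_{X_t}$. In a local toric model of $\pi$ near a closed point of $S_t$, the morphism $\pi$ is realized by a toric morphism $X_\sigma\to X_{\sigma_T}$, the stratum $S$ corresponds to a face $\tau_S\subset\sigma$, and $p$ is the star subdivision of $\sigma$ at the primitive lattice vector $v$ generating the ray of $E$ in the relative interior of $\tau_S$. The local fan of $X_t$ is the subfan of $\sigma$ lying in the kernel of the lattice map $N_\sigma\to N_{\sigma_T}$, and $\tau_S$ restricts to the face corresponding to $S_t$. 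The star subdivision at $v$ restricts to the star subdivision of this fibre fan at the image of $v$, producing a single toroidal divisor on $X_t$ with centre $S_t$; this is $E_t$.

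Third, I would verify the discrepancy and conclude by uniqueness. Restricting the defining identity $K_Y+\Sigma_Y=p^*(K_X+\Sigma_X)$ to the fibre $Y_t$ gives $K_{Y_t}+\Sigma_{Y_t}=p_t^*(K_{X_t}+\Sigma_{X_t})$, using that $Y_t$ and $X_t$ are reduced Cartier divisors with trivial conormal bundle so that ordinary adjunction applies, and that the divisors $\Sigma_{X_t}$, $\Sigma_{Y_t}$ produced by Lemma \ref{lem: fibre of toroidal} agree with the set-theoretic restrictions of $\Sigma_X$, $\Sigma_Y$. Thus $a(E_t,X_t,\Sigma_{X_t})=-1$, so $E_t$ is toroidal with respect to $(X_t,\Sigma_{X_t})$. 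Since $p_t$ extracts only $E_t$, the uniqueness clause of Notation \ref{toroidal extraction} identifies $p_t$ with the toroidal extraction of $E_t$.

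The main obstacle will be the local-to-global verification that $E_t$ is a single prime divisor on $X_t$: in a local chart near a point of $S_t$ the star subdivision extracts an irreducible divisor, but without the irreducibility of fibres of strata the local pictures over different components of $S_t$ could fail to glue into a single prime divisor on $X_t$. The hypothesis is therefore essential both for patching the local star subdivisions and for ensuring that the ray of $E$ in $\tau_S$ descends to a single well-defined ray in the restricted fan of $X_t$.
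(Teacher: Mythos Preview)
Your proof is correct but takes a somewhat different route from the paper's. The paper argues more directly: after establishing (as you do) that $(Y_t,\Sigma_{Y_t})$ is quasi-smooth toroidal and that $S_t$ is an irreducible stratum of $\Sigma_{X_t}$ (so that the exceptional locus of $p_t$ is the single prime divisor $E_t$), it simply observes that since $-E$ is $p$-ample (this being part of the definition of the extremal toroidal extraction in Notation~\ref{toroidal extraction}), the restriction $-E_t$ is $p_t$-ample, and hence $p_t$ is the toroidal extraction of $E_t$. This bypasses both your local star-subdivision analysis and your discrepancy computation via adjunction. Your approach works, but note that the uniqueness clause in Notation~\ref{toroidal extraction} characterizes the toroidal extraction as the \emph{extremal} contraction extracting the given divisor; to invoke it cleanly you should make explicit that your local star-subdivision description yields extremality of $p_t$ (equivalently, relative Picard number one, or $-E_t$ being $p_t$-ample), which is precisely what the paper checks in one line. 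The fibre-fan description in your second step is also a bit imprecise as stated, though the intended content is correct. The paper's ampleness argument buys brevity and avoids any local toric bookkeeping; your approach has the advantage of making the toroidal structure of $p_t$ visible explicitly, which could be useful if one needed finer information about the extraction.
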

    \begin{proof}
      By assumption $(Y,\Sigma_Y)$ is a quasi-smooth toroidal couple, and the induced morphism $\pi_Y:(Y,\Sigma_Y)\to T$ is a toroidal contraction. By Lemma \ref{lem: fibre of toroidal}, each fibre $(Y_t,\Sigma_{Y_t})$ is also quasi-smooth toroidal. Let $S$ be the centre of $E$ on $X$, then $S$ is a stratum of $\Sigma_X$. By assumption the intersection $S_t:=S\cap X_t$ is irreducible and is a stratum of $\Sigma_{X_t}$. Consequently, the exceptional locus of the fibrewise morphism $p_t:Y_t\to X_t$ is exactly $E_t$, a single prime divisor. Moreover, since $-E$ is $p$-ample, $-E_t$ is $p_t$-ample. Therefore, $p_t$ is the toroidal extraction of $E_t$.
    \end{proof}

\begin{thm}\label{inv vol}
	Let $f:(X,\Sigma_X)\to (Z,\Sigma_Z)/T$ be a family of weak semistable morphisms. Let $B$ be an $\bR$-divisor on $X$ such that $0\le B\le \Sigma^h_X$, where $\Sigma^h_X$ denotes the horizontal$/Z$ part of $\Sigma_X$. 
 
 Then the relative log canonical volume $\vol(X_t,K_{X_t/Z_t}+B_t)$ is independent of $t\in T$.
\end{thm}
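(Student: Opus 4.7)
The plan is to follow the strategy advertised in the opening of Section \ref{section4}: rather than attempting to show deformation invariance of $h^0(X_t, m(K_{X_t/Z_t}+B_t))$ directly (which may genuinely fail; cf.\ Question \ref{question:inv of plurigenera}), I would construct an \emph{auxiliary adjoint} Cartier divisor whose plurigenera are invariant via relative Kawamata--Viehweg vanishing, and then extract invariance of $\vol$ as a limit in $m$. Standard slicing reduces to the case when $T$ is a smooth affine curve, and it suffices to compare $\vol(X_{t_0},L_{t_0})$ with $\vol(X_{t},L_{t})$ for a general $t$, where $L=K_{X/Z}+B$ and $\pi:X\to T$ is the induced contraction. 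Upper semi-continuity of $h^0$ in flat families already delivers $\vol(X_{t_0},L_{t_0})\ge \vol(X_{t},L_{t})$, so the content is the opposite inequality. Before starting, I would apply Lemma \ref{extract family} iteratively to arrange that every horizontal stratum of $(X,\Sigma_X)$ has irreducible fibres over $T$; together with Lemma \ref{lem: restriction of toroidal} this ensures that the auxiliary divisors constructed below restrict to each fibre $(X_t,\Sigma_{X_t})$ in a manner compatible with the quasi-smooth toroidal structure.

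The quasi-smoothness of $(X,\Sigma_X)$ and the weak semistability of $f$ give the key Cartier divisor
\[
L' := K_{X/Z}+\Sigma^h_X = (K_X+\Sigma_X) - f^*(K_Z+\Sigma_Z),
\]
whose restriction to any fibre equals $K_{X_t/Z_t}+\Sigma^h_{X_t}$. For $m$ sufficiently divisible I would set
\[
M_m := \lfloor m(K_{X/Z}+B)\rfloor + (\Sigma^h_X - \{mB\}),
\]
or a close variant, engineered so that three properties hold simultaneously: (a) $M_m$ is an integral Cartier divisor whose restriction to each fibre $X_t$ agrees with the analogous construction on $(X_t,\Sigma^h_{X_t})$; (b) one may decompose $M_m-K_X = \Delta_m+N_m$ with $(X,\Delta_m)$ sub-klt and $N_m$ relatively nef over $T$, so that relative Kawamata--Viehweg vanishing yields $R^i\pi_{*}\CO_X(M_m)=0$ for $i>0$; and (c) $M_m - \lfloor m(K_{X/Z}+B)\rfloor$ is bounded by the fixed divisor $\Sigma^h_X$, independent of $m$. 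Property (a) is automatic after the preliminary extractions, property (c) is built into the construction, and property (b) is obtained by perturbing $mB$ into the slack $\Sigma^h_X-B\ge 0$ to convert the lc boundary $\Sigma^h_X$ into a klt one.

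With $M_m$ in hand, property (b) and cohomology-and-base-change imply $\pi_{*}\CO_X(M_m)$ is locally free, so $h^0(X_t,M_m|_{X_t})$ is independent of $t$. Property (c) gives the sandwich
\[
h^0\bigl(X_t,\lfloor m(K_{X_t/Z_t}+B_t)\rfloor\bigr) \le h^0(X_t,M_m|_{X_t}) \le h^0\bigl(X_t,\lfloor m(K_{X_t/Z_t}+B_t)\rfloor+\Sigma^h_{X_t}\bigr),
\]
and the two outer terms differ only by $O(m^{d-1})$ sections (coming from the fixed divisor $\Sigma^h_{X_t}$). Dividing by $m^d/d!$ and sending $m\to\infty$ yields
\[
\vol(X_t,K_{X_t/Z_t}+B_t) = \lim_{m\to\infty}\frac{d!}{m^d}\,h^0(X_t,M_m|_{X_t}),
\]
and the right-hand side is independent of $t$.

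The main obstacle is the construction and verification of property (b) for the auxiliary $M_m$. Relative Kawamata--Viehweg vanishing demands a klt pair whose boundary uses strictly less than the full weight of $\Sigma^h_X$, yet the coefficients of $B$ may already be arbitrarily close to $1$ on components of $\Sigma^h_X$; hence the perturbation carving out the klt boundary must be simultaneously small (to preserve property (c)), fibrewise effective (to restrict sensibly to each $X_t$), and compatible with the relative structure over $Z$ (so that $N_m$ stays relatively nef over $T$). The toroidal preliminaries---particularly Lemma \ref{lem: restriction of toroidal} and Lemma \ref{extract family}---are exactly what make this perturbation step tractable, by controlling how $\Sigma_X$ descends to fibres and how it transforms under toroidal birational modifications one may need along the way.
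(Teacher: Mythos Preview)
Your framework has the right shape—build an auxiliary adjoint divisor, apply Kawamata--Viehweg vanishing over $T$, and squeeze the volume out as a limit—but property (b) is where the real difficulty lies, and your sketch does not actually supply it. Perturbing $mB$ into the slack $\Sigma^h_X-B$ can indeed make the \emph{boundary} $\Delta_m$ klt, but it does nothing to produce a relatively \emph{nef} $N_m$. After writing $M_m-K_X-\Delta_m$, what remains is essentially $(m-1)(K_{X/Z}+B)$ plus a pullback from $Z$ plus bounded correction terms; none of these is known to be nef over $T$ a priori. Relative Kawamata--Viehweg vanishing needs a genuinely nef (or semiample) input, and you have not introduced any ample divisor nor run any MMP to manufacture one. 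Without this the vanishing $R^i\pi_*\CO_X(M_m)=0$ is unjustified, and the argument collapses. (A minor point: Lemma~\ref{extract family} does not arrange irreducible fibres of strata; the paper uses an \'etale base change \cite[Claim~4.38.1]{Kol13} for that.)

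The paper's proof confronts exactly this obstacle by a substantially heavier construction. It first reduces to $(X,B)$ canonical with $\lfloor B\rfloor=0$, then introduces \emph{two} auxiliary ample contributions: a very ample $A$ on $X$ and $H=\tfrac{1}{2}f^*\sum H_i$ coming from a very ample system on $Z$. With these it proves that $K_{X/Z}+B$ is pseudo-effective$/T$ (via \cite[Lemma~3.2]{HMX2} and \cite[Theorem~5.2]{ACSS21}), and then studies the interpolated divisor $K_{\mathfrak{A}_\delta}=\delta K_{X/Z}+(1-\delta)K_X+B+\tfrac{1}{k}A+(1-\delta)H$. The point is that $K_{\mathfrak{A}_\delta}$ is $\bQ$-linearly equivalent over $Z$ to a klt big log divisor, so one can run an MMP$/T$ that terminates with a \emph{semiample} model; this is what furnishes the nef part for Kawamata--Viehweg. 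Crucially, \cite[Lemma~3.1]{HMX2} is invoked to show the MMP restricts to a birational contraction on the special fibre, which is how the extension $|mK_{\mathfrak{A}_{\delta,0}}|=|mK_{\mathfrak{A}_\delta}||_{X_0}$ is established. Only after this does one take limits $k\to\infty$ and $\delta\to 1^-$ to recover the invariance of $\vol(X_t,K_{X_t/Z_t}+B_t)$. Your sandwich idea in $m$ is replaced by a limit in the auxiliary parameters $(k,\delta)$; the MMP and extension lemma are the missing ingredients that your perturbation cannot substitute for.
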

    \begin{proof}
		\noindent\textbf{Step 1}. In this step, we fix notation and make standard reductions. Let $d=\dim X$ and let $\CF$ be the foliation induced by $f$. Since $f$ has reduced fibres, we have $K_\CF=K_{X/Z}$. Clearly,
 both $(X,\CF,B)$ and each fibre $(X_t,\CF_t,B_t)$ are lc. Replacing $T$ with a suitable \'etale cover \cite[Claim 4.38.1]{Kol13}, we may assume that each stratum of $\Sigma^h_X$ has irreducible fibres over $T$. Replacing $T$ with the intersection of general hyperplane sections and shrinking $T$, we may assume that $T$ is a smooth affine curve. 
  
We take a sequence of $\bQ$-divisors $\{B_i\}_{i=1}^{\infty}$ such that $B_i\le B$ and $\lim_{i\to\infty}B_i=B$. It suffices to prove the statement for each $B_i$, so we may assume $B$ is a $\bQ$-divisor.
By the continuity of volumes, we have $$\lim_{\epsilon\to 0}\vol\left(X_t,K_{\CF_t}+\left(1-\epsilon\right)B_t\right)=\vol(X_t,K_{\CF_t}+B_t).$$ Therefore we may assume that $\lfloor B\rfloor=0$. This implies that both $(X,B)$ and each $(X_t,B_t)$ are klt. Finally, we may assume that there exists a closed point $s\in T$ such that $$\vol(X_s,K_{\CF_s}+B_s)>0,$$ otherwise there is nothing to prove. 
        
 \medskip
        \noindent\textbf{Step 2}. In this step, we reduce to the case that $(X,B)$ is canonical and prove that $K_\CF+B$ is pseudo-effective$/T$.
        
        Since $f$ is flat with reduced fibres, we have $\Sigma^v_X=f^*(\Sigma_Z)$. Consequently, $K_X+\Sigma^h_X$ is a Cartier divisor. Since $(X,B)$ is klt, there are finitely many divisors $E$ exceptional$/X$ with discrepancy $a(E,X,B)<0$, each such divisor is toroidal with respect to $(X,\Sigma_X)$, and its centre on $X$ is a stratum of $\Sigma^h_X$. By applying Lemma \ref{extract hor wss} and Lemma \ref{extract family} finitely many times and replacing $(X,B)$ with the resulting crepant model, we may assume $(X,B)$ is canonical.
        
        To prove that $K_{\CF}+B$ is pseudo-effective$/T$, we are free to shrink $T$ around $s$. Fix a very ample linear series $|L|$ on $Z$. Let $H_1,...,H_{4d+1}$ be $4d+1$ general elements of $|L|$. After shrinking $T$, we may assume that the pair $$\left(Z,\Sigma_Z+\sum_{i=1}^{4d+1}H_i
        \right)$$ is log smooth over $T$. 
        By \cite[Proposition 3.2]{AK00}, we can add $\sum_{i=1}^{4d+1}H_i$ and $f^*(\sum_{i=1}^{4d+1}H_i)$ to the toroidal structure of $Z$ and $X$, respectively, such that $$f:\left(X,\Sigma_X+\sum_{i=1}^{4d+1}f^*H_i\right)\to \left(Z,\Sigma_Z+\sum_{i=1}^{4d+1}H_i\right)$$ remains toroidal and hence weak semistable. By the length of extremal rays, the divisor $K_Z+\frac{1}{2}\sum_{i=1}^{4d+1}H_i$ is ample on $Z$. Then the divisor
         \begin{align*}
K_{X_s}+B_s+H_s=K_{\CF_s}+B_s+f^*_s\left(K_{Z_s}+\frac{1}{2}\sum_{i=1}^{4d+1}H_{i,s}\right)
         \end{align*}
is big, as it is the sum of a big divisor and a nef divisor. Let $H:=\frac{1}{2}f^*(\sum_{i=1}^{4d+1}H_i)$.
\begin{claim}\label{Claim:pseudo-effective}$K_X+B+H$ is pseudo-effective$/T$. 
\end{claim}
\begin{proof}
Passing to an \'etale cover of $T$, we may assume that every stratum of the toroidal couple $$\left(X,\Sigma_X+\sum^{4d+1}_{i=1}f^*H_i\right)$$ has irreducible fibres over $T$ (cf. \cite[Claim 4.38.1]{Kol13}). 
Note that $(X,B+H)$ is klt. By applying Lemma \ref{extract family} finitely many times, we may assume that $(X,B+H)$ is canonical. Note also that $K_{X_s}+B_s+H_s$ remains big. By \cite[Theorem 1.2]{BCHM10}, $(X_s,B_s+H_s)$ has a good minimal model. Moreover, $N_\sigma\left(K_{X_s}+B_s+H_s\right)$, the numerically fixed part of the Nakayama--Zariski decomposition of $K_{X_s}+B_s+H_s$, is a $\bQ$-divisor. We define the divisor
$$\Theta_s:=B_s+H_s-\left(B_s+H_s\right)\wedge N_\sigma\left(K_{X_s}+B_s+H_s\right).$$
Then $\Theta_s\in \bQ$, $K_{X_s}+\Theta_s$ remains big and no component of $\Theta_s$ is contained in $\mathbf{B}_{-}\left(K_{X_s}+\Theta_s\right)$. Note that $(X_s,\Theta_s)$ still has a good minimal model. Pick the unique divisor $\Theta$ on $X$ such that $0\le\Theta\le B+H$ and $\Theta|_{X_s}=\Theta_s$. Now by \cite[Lemma 3.2]{HMX2}, $K_{X}+\Theta$ is pseudo-effective$/T$. Thus, $K_X+B+H$ is also pseudo-effective$/T$. The claim follows.
\end{proof}
\noindent\emph{Proof of Theorem \ref{inv vol} continued.} By Claim \ref{Claim:pseudo-effective} and \cite[Theorem 5.2 or Theorem 5.3]{ACSS21}, $K_{\CF}+B$ is pseudo-effective$/T$ (see also \cite[Theorem 5.1]{CHLMSSX25}).

\medskip
\noindent\textbf{Step 3}. In this step, we finish the proof by using Proposition \ref{inv of adj plurigenera} below. 
By a slight abuse of notation, we may shrink $T$ and assume that there exist $4d+1$ ample divisors $H_1,...,H_{4d+1}$ on $Z$ belonging to a fixed very ample linear series such that $(Z,\Sigma_Z+\sum_{i=1}^{4d+1}H_i)$ is log smooth over $T$. As before, let $H:=\frac{1}{2}f^*(\sum_{i=1}^{4d+1}H_i)$. We also fix a general very ample divisor $A$ on $X$ and a positive integer $k>1$ such that $(X,B+\frac{1}{k}A)$ remains canonical.
		
For any $\delta\in(0,1)$, we define the divisor$$K_{\mathfrak{A}_{\delta}}:=\delta K_{\CF}+(1-\delta)K_X+B+\frac{1}{k}A+(1-\delta)H.$$  
By Proposition \ref{inv of adj plurigenera}, $h^0(X_t,mK_{\mathfrak{A}_{\delta,t}})$ is independent of $t$ for any sufficiently divisible $m$. Thus, the volume $\vol(X_t,K_{\mathfrak{A}_{\delta,t}})$ is also independent of $t$. By taking the limit as $k\to\infty$, we see that $$\vol\Bigl(X_t,\delta K_{\CF_t}+(1-\delta)K_{X_t}+B_t+(1-\delta)H_t\Bigr)$$ is independent of $t$. This holds for all $\delta\in(0,1)$. Finally, taking the limit as $\delta\to 1^{-}$, we conclude that $\vol(X_t,K_{\CF_t}+B_t)$ is independent of $t$.
\end{proof}

\begin{prop}\label{inv of adj plurigenera}
	Let $\delta\in (0,1)$ be a real number and $k>1$ be an integer. Assume that:
 \begin{enumerate}
     \item $f:(X,\Sigma_X)\to (Z,\Sigma_Z)/T$ is a family of weak semistable morphisms with $\dim X=d$,
     \item $B$ is a $\bQ$-divisor on $X$ satisfying $0\le B\le \Sigma^h_X$ and $A$ is a general very ample divisor on $X$ such that $(X,B+\frac{1}{k}A)$ has canonical singularities,
     \item there exist a very ample linear series $|L|$ on $Z$ and divisors $H_1,..,H_{4d+1}\in|L|$ such that $(Z,\Sigma_Z+\sum_{i=1}^{4d+1}H_i)$ is log smooth over $T$, and
     \item  $K_{X/Z}+B$ is pseudo-effective$/T$.
 \end{enumerate}
 Then $h^0(X_t,mK_{\mathfrak{A}_{\delta,t}})$ is independent of $t\in T$ for any sufficiently divisible integer $m>0$, where $$K_{\mathfrak{A}_{\delta}}:=\delta K_{X/Z}+(1-\delta)K_X+B+\frac{1}{k}A+(1-\delta)H,$$ with $H:=\frac{1}{2}f^*(\sum_{i=1}^{4d+1}H_i)$.
\end{prop}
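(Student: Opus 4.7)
The plan is to prove invariance of plurigenera via a Siu-style extension argument, modelled on the invariance of log plurigenera of Hacon--McKernan--Xu. After reducing to the case where $T$ is a smooth affine curve (as in Step 1 of the proof of Theorem~\ref{inv vol}), I would fix two closed points $s,t\in T$ and aim to show $h^0(X_s, mK_{\mathfrak{A}_{\delta,s}}) = h^0(X_t, mK_{\mathfrak{A}_{\delta,t}})$ for every sufficiently divisible $m$. For such $m$ the sheaf $\mathcal{O}_X(mK_{\mathfrak{A}_\delta})$ is a line bundle on the flat family $X\to T$, so upper semicontinuity of cohomology provides $h^0(X_s, mK_{\mathfrak{A}_{\delta,s}}) \ge h^0(X_t, mK_{\mathfrak{A}_{\delta,t}})$ for $t$ in a neighbourhood of $s$; the substantive content is the reverse inequality, obtained by lifting each pluri-section on $X_s$ to a global pluri-section on $X$.

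The key algebraic observation is the twofold decomposition
\[
K_{\mathfrak{A}_\delta} = \delta(K_{X/Z}+B) + (1-\delta)(K_X+B+H) + \tfrac{1}{k}A = K_X + \Delta_\delta - \delta f^*K_Z,
\]
with $\Delta_\delta := B + (1-\delta)H + \tfrac{1}{k}A$. The first presentation shows that $K_{\mathfrak{A}_\delta}$ is big$/T$: the first summand is pseudo-effective$/T$ by hypothesis, the second equals $(K_{X/Z}+B) + f^*(K_Z+\tfrac{1}{2}\sum H_i)$, the sum of a pseudo-effective$/T$ divisor and the nef pullback of the ample divisor $K_Z+\tfrac{1}{2}\sum H_i$ from $Z$ (ampleness being the length-of-extremal-rays inequality used in Step 2 of Theorem~\ref{inv vol}), and $\tfrac{1}{k}A$ is ample on $X$. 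The second presentation rewrites $K_{\mathfrak{A}_\delta}$ as the adjoint of a klt pair twisted by a pullback from $Z$: indeed $(X,B+\tfrac{1}{k}A)$ is canonical by assumption, every coefficient of $(1-\delta)H$ is strictly less than $1$, and the log smoothness$/T$ of $(Z,\Sigma_Z+\sum H_i)$ together with the weak semistability of $f$ force $(X,\Delta_\delta)$ to be klt. Extension then proceeds from the short exact sequence
\[
0 \to \mathcal{O}_X(mK_{\mathfrak{A}_\delta} - X_s) \to \mathcal{O}_X(mK_{\mathfrak{A}_\delta}) \to \mathcal{O}_{X_s}(mK_{\mathfrak{A}_{\delta,s}}) \to 0
\]
(with $X_s$ Cartier as the fibre of the toroidal map $X\to T$ over a point of a smooth curve) together with $H^1(X, mK_{\mathfrak{A}_\delta}-X_s)=0$, which one would deduce from Kawamata--Viehweg, or a Nadel refinement, applied to $mK_{\mathfrak{A}_\delta} - X_s - K_X = \Delta_\delta + N$, where $N := (m-1)K_{\mathfrak{A}_\delta} - \delta f^*K_Z - X_s$ needs to be nef and big.

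The main obstacle will be verifying this nef-and-big property of the residual $N$. The hidden twist $-\delta f^*K_Z$ inside $K_{\mathfrak{A}_\delta}$ is not positive when $\delta$ is close to $1$ and cannot be neutralised by any single application of the length-of-extremal-rays inequality on $Z$. The resolution, following the template of Siu's and Hacon--McKernan's proofs of invariance of plurigenera, is an inductive bootstrap: one combines the genuine ampleness of $\tfrac{m-1}{k}A$ on $X$, the nef pullback $\tfrac{(m-1)(1-\delta)}{2}f^*\sum H_i$, and the asymptotic positivity squeezed out of the pseudo-effectivity$/T$ of $K_{X/Z}+B$ (via diminished base loci or asymptotic multiplier ideals as in \cite[Theorem 4.2]{HMX1} and \cite[Lemma 6.3]{HMX2}) to absorb the negativity of $-\delta f^*K_Z - X_s$. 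Throughout, Lemmas~\ref{lem: restriction of toroidal}, \ref{extract hor wss}, and \ref{lem: fibre of toroidal} ensure that every restriction to a stratum or extraction of a toroidal divisor preserves weak semistability and log smoothness over $T$, so that the inductive step can be run uniformly in $t\in T$, yielding the extension and therefore the claimed invariance.
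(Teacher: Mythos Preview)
Your diagnosis of the obstacle is correct, but the proposed fix is where the argument breaks down. In the classical Siu/HMX extension scheme the induction works because $mD - K_X - \Delta$ is, up to boundary, a multiple of $D$ itself; here the identity $mK_{\mathfrak{A}_\delta} - K_X - \Delta_\delta = (m-1)K_{\mathfrak{A}_\delta} - \delta f^*K_Z$ carries an extra twist $-\delta f^*K_Z$ that is \emph{not} a multiple of $K_{\mathfrak{A}_\delta}$, so the asymptotic multiplier ideal $\mathcal{J}(\|(m-1)K_{\mathfrak{A}_\delta}\|)$ does not control it. Your suggestion to absorb it into $\frac{m-1}{k}A$ only addresses bigness, not nefness: even after subtracting an ample piece, the remaining $(m-1)(K_{X/Z}+B+\cdots)$ is merely pseudo-effective$/T$, and no inductive mechanism in your outline converts this into the nef input that Kawamata--Viehweg (or Nadel) requires.

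The paper takes a different route and avoids working on $X$ directly. One first replaces $B_0$ by $\Theta_0:=B_0-B_0\wedge N_\sigma(K_{\mathfrak{A}_{\delta,0}})$ and lifts to $\Theta$ on $X$, so that no component of $\Theta_0$ lies in $\mathbf{B}_-$; this is precisely the hypothesis needed to invoke \cite[Lemma~3.1]{HMX2} and ensure the forthcoming MMP restricts to a birational contraction on the central fibre. One then runs a $(K_X+\Theta+\frac{1}{k}A+H)$-MMP$/T$ to a good minimal model $Y$. The decisive point---and the reason $\delta$ was introduced---is that by \cite[Theorem~16.1.4]{CHLX23} this is \emph{simultaneously} a $(K_\CF+\Theta+\frac{1}{k}A)$-MMP, so on $Y$ both $M_X:=K_X+\Theta+\frac{1}{k}A+H$ and $M_\CF:=K_\CF+\Theta+\frac{1}{k}A$ become semiample. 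Writing $m(\delta M_\CF+(1-\delta)M_X)-M_X=m\delta M_\CF+(m(1-\delta)-1)M_X$, one now has a genuinely semiample divisor on the right, and Kawamata--Viehweg on a common resolution yields $H^1=0$ and hence surjectivity of restriction. The $\Theta$-construction, the MMP, and the simultaneous semiampleness of $M_X$ and $M_\CF$ are the ingredients missing from your plan.
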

\begin{proof}\textbf{Step 1}. In this step, we fix notation and make standard reductions. Since the statement is local on $T$, we may fix a closed point $0\in T$ and shrink $T$ around $0$. By replacing $T$ with the intersection of general hyperplane sections containing $0$ and shrinking further, we may assume $T$ is a smooth affine curve. Furthermore, by passing to an \'etale cover we may assume that every component of $B$ has irreducible fibres over $T$.
    
Let $\CF$ be the foliation induced by $f$. Since $f$ has reduced fibres, we have $K_\CF=K_{X/Z}$. Define the divisor $$N:=f^*\left(K_Z+\frac{1}{2}\sum_{i=1}^{4d+1}H_i\right).$$ By the length of extremal rays, $K_Z+\frac{1}{2}\sum_{i=1}^{4d+1}H_i$ is ample on $Z$. Thus, $N$ is semiample. We may write:
    \begin{align*}
        K_{\mathfrak{A}_{\delta}}&=\delta\left(K_\CF+B+\frac{1}{k}A\right)+(1-\delta)\left(K_X+B+\frac{1}{k}A+H\right)\\
        &=K_\CF+B+\frac{1}{k}A+\biggl(1-\delta\biggr)N.
    \end{align*}
Note that $$K_\CF+B+\frac{1}{k}A\sim_{\bQ,Z}K_X+B+\frac{1}{k}A+H.$$
The pair $(X,B+\frac{1}{k}A+H)$ is klt. Moreover, $K_X+B+\frac{1}{k}A+H$ is big$/T$. By \cite[Corollary 1.4.2]{BCHM10}, we may run a $(K_X+B+\frac{1}{k}A+H)$-MMP$/T$. This MMP terminates at a good minimal model $X'/T$ of $(X,B+\frac{1}{k}A+H)$. By \cite[Theorem 16.1.4]{CHLX23}, this is also a $(K_\CF+B+\frac{1}{k}A)$-MMP$/T$. Furthermore, $X'$ is the good minimal model$/T$ of the generalized foliated quadruple $(X,\CF,B,\frac{1}{k}\Bar{A})$ (see the following Remark \ref{g-pair}). In particular, $X'$ is the minimal model of $K_{\mathfrak{A}_\delta}$. Hence, $N_\sigma(K_{\mathfrak{A}_\delta})$ is a $\bQ$-divisor. Using the same argument, we deduce that $N_\sigma(K_{\mathfrak{A}_{\delta,0}})$ is also a $\bQ$-divisor.
     
It suffices to show that $|mK_{\mathfrak{A}_{\delta,0}}|=|mK_{\mathfrak{A}_{\delta}}||_{X_0}$.

\medskip
\noindent\textbf{Step 2}. In this step, we make preparations to run an MMP$/T$ in the next step. 
    
We define $\Theta_0:=B_0-B_0\wedge N_{\sigma}(K_{\mathfrak{A}_{\delta,0}})$. Then $\Theta_0$ is a $\bQ$-divisor and $$\Theta_0\wedge N_{\sigma}\left(K_{\CF_0}+\Theta_0+\frac{1}{k}A_0+\biggl(1-\delta\biggr)N_0\right)=0.$$ Here note that: 
 $$ K_{\CF_0}+\Theta_0+\frac{1}{k}A_0+\left(1-\delta\right)N_0=\delta K_{\CF_0}+(1-\delta)K_{X_0}+\Theta_0+\frac{1}{k}A_0+(1-\delta)H_0,$$ and $$K_{\CF_0}+\Theta_0+\frac{1}{k}A_0+N_0=K_{X_0}+\Theta_0+\frac{1}{k}A_0+H_0.$$
Since $N_0$ is semiample, we have $$\mathbf{B}_{-}\left(\delta K_{\CF_0}+\left(1-\delta\right)K_{X_0}+\Theta_0+\frac{1}{k}A_0+\left(1-\delta\right)H_0\right)\subset\mathbf{B}_{-}\left(K_{X_0}+\Theta_0+\frac{1}{k}A_0+H_0\right).$$
    
Note that $(X_0,\Theta_0+\frac{1}{k}A_0)$ is canonical. Each non-canonical centre of this pair is an irreducible component of $\Theta_0+\frac{1}{k}A_0$, and thus is not contained in $\mathbf{B}_{-}\bigl(K_{\CF_0}+\Theta_0+\frac{1}{k}A_0+\left(1-\delta\right)N_0\bigr)$ as $A_0$ is ample. Possibly replacing each $H_i$ up to linear equivalence, we may assume that no non-canonical centre of $(X_0, \Theta_0+\frac{1}{k}A_0+H_0)$ is contained in $\mathbf{B}_{-}\bigl(K_{\CF_0}+\Theta_0+\frac{1}{k}A_0+\left(1-\delta\right)N_0\bigr)$, hence is not contained in $\mathbf{B}_{-}\left(K_{X_0}+\Theta_0+\frac{1}{k}A_0+H_0\right)$.
	
Since every irreducible component of $B$ has irreducible fibres over $T$, we can pick the unique divisor $0\leq\Theta\leq B$ with $\Theta|_{X_0}=\Theta_0$. Note that $K_{\CF_0}+\Theta_0$ is pseudo-effective. By a similar argument as in \textbf{Step 2} of Theorem \ref{inv vol}, $K_{\CF}+\Theta$ is pseudo-effective$/T$. 

    \medskip
    \noindent\textbf{Step 3}. In this step, we use \cite[Lemma 3.1]{HMX2} to run an MMP$/T$ and apply the Kawamata--Viehweg vanishing theorem. Now $(X,\Theta+\frac{1}{k}A+H)$ is klt and $K_X+\Theta+\frac{1}{k}A+H$ is big$/T$. We can run a $(K_X+\Theta+\frac{1}{k}A+H)$-MMP$/T$, which terminates at a good minimal model $Y$ \cite[Corollary 1.4.2]{BCHM10}. Note that $$K_X+\Theta+\frac{1}{k}A+H\sim_{\bQ,Z}K_{\CF}+\Theta+\frac{1}{k}A,$$ by using a similar argument as in \textbf{Step 1} and the results in \cite[Section 9 and 16]{CHLX23}, we see that this MMP is also a $(K_{\CF}+\Theta+\frac{1}{k}A)$-MMP$/T$. 
    
    By \cite[Proposition 3.2]{AK00}, we can add $X_0$ to the toroidal boundary to obtain a new toroidal couple $(X,X_0+\Sigma_X+\sum_{i=1}^{4d+1} H_i)$. Note that the pair $$\left(X,X_0+B+\frac{1}{2}\sum_{i=1}^{4d+1} H_i\right)$$ is plt. Let $\phi:X\dashrightarrow Y$ denote the corresponding birational map. By \cite[Lemma 3.1]{HMX2}, $\phi_0:X_0\dasharrow Y_0$ is a birational contraction. Let $W$ be a common log resolution of $X$ and $Y$, and let $p:W\to X$ and $q:W\to Y$ be the induced morphisms. Then we may write $$K_W+W_0=p^*\left(K_X+X_0+\Theta+\frac{1}{k}A+H\right)+E,$$
	where $W_0$ is the strict transform of $X_0$ and $\lceil E\rceil\geq 0$ is $p$-exceptional.
	We may also write $$p^*\left(K_{\CF}+\Theta+\frac{1}{k}A\right)=q^*\phi_*\left(K_{\CF}+\Theta+\frac{1}{k}A\right)+F_1$$
	and $$p^*\left(K_{X}+\Theta+\frac{1}{k}A+H\right)=q^*\phi_*\left(K_{X}+\Theta+\frac{1}{k}A+H\right)+F_2,$$
	where $F_1$ and $F_2$ are effective $q$-exceptional divisors. For simplicity of notation, we define two divisors $$M_X:=K_{X}+\Theta+\frac{1}{k}A+H,\,\,\, M_{\CF}:=K_{\CF}+\Theta+\frac{1}{k}A.$$ 
 By \cite[Theorem 16.1.4]{CHLX23}, both $\phi_*M_X$ and $\phi_*M_\CF$ are semiample. 
Possibly shrinking $T$, we may assume that $X_0$ is $\bQ$-linearly equivalent to zero. Define
	$$\alpha=mp^*\Bigl(\delta M_{\CF}+(1-\delta)M_X\Bigr)+E-m\delta F_1-\left(m\left(1-\delta\right)-1\right)F_2.$$ 
Set $L=\lceil \alpha\rceil$, $C=\{-\alpha\}$, and $F_0=m\delta F_1+\left(m\left(1-\delta\right)-1\right)F_2$. Then we have:
	\begin{align*}
		L-W_0 &=mp^*\Bigl(\delta M_{\CF}+(1-\delta)M_X\Bigr)+E-F_0+C-W_0\\
		&=p^*M_X+E-W_0+C+p^*\Bigl(m\delta M_{\CF}+(m(1-\delta)-1) M_X\Bigr)-F_0\\
		&\sim_{\bQ}K_W+C+q^*\phi_*\Bigl(m\delta M_{\CF}+(m(1-\delta)-1)M_X\Bigr).
	\end{align*}

The pair $(W,C)$ is klt since it is log smooth and the coefficients of $C$ are strictly less than $1$.  The Kawamata--Viehweg vanishing theorem implies that $$H^1\bigl(W,\mathcal{O}_W(L-W_0)\bigr)=0.$$ 
 Thus, the restriction morphism $$H^0\bigl(W,\CO_{W}(L)\bigr)\to H^0\bigl(W_0,\CO_{W_0}(L|_{W_0})\bigr)$$
	 is surjective. 
     
     \medskip
     \noindent\textbf{Step 4}. In this step, we complete the proof using the result in \textbf{Step 3}. Note that the support of the divisor $L-\Bigl\lfloor mq^*\phi_*\Bigl(\delta M_\CF+(1-\delta)M_X\Bigr)\Bigr\rfloor$ does not contain $W_0$. We have 
	 \begin{align*}
	 	&\quad \quad L-\Bigl\lfloor mq^*\phi_*\Bigl(\delta M_\CF+(1-\delta)M_X\Bigr)\Bigr\rfloor=\lceil \alpha\rceil-\Bigl\lfloor mq^*\phi_*\Bigl(\delta M_\CF+(1-\delta)M_X\Bigr) \Bigr\rfloor \tag{*}\\
	 	&\geq \Bigl\lceil \alpha-mq^*\phi_*\Bigl(\delta M_\CF+\left(1-\delta\right)M_X\Bigr)\Bigr\rceil=\lceil E+F_2\rceil \geq0.
	 \end{align*}
Furthermore, we have the following inclusion 
	\begin{align*}
		|L|&\subset\bigl|mp^*(K_{\mathfrak{A}_{\delta}})+\lceil E-F_0\rceil\bigr|\subset\bigl|mp^*(K_{\mathfrak{A}_{\delta}})+\lceil E\rceil\bigr|=|mK_{\mathfrak{A}_{\delta}}|.\tag{**}
	\end{align*}
 Let $q_0: W_0\to Y_0$ be the restriction of $q$ to $W_0$. We have
\begin{align*}
	|mK_{\mathfrak{A}_{\delta,0}}|
 &=\Bigl|m\Bigl(\delta M_{\CF,0}+\left(1-
	\delta\right) M_{X,0}\Bigr)\Bigr|
 && \text{by definition of}\,\, \Theta_0,\\
 &\subset\Bigl|mq_0^*f_{0*}\Bigl(\delta M_{\CF,0}+\left(1-
	\delta\right) M_{X,0}\Bigr)\Bigr|
 &&\text{since}\,\, f_0 \,\,\text{is a birational contraction,} \\
 &\subset|L_{|W_0}|
 && \text{by (*),}\\
 &=|L||_{W_0} 
 && \text{since}\,\,H^1\bigl(W,\mathcal{O}_W(L-W_0)\bigr)=0,\\
	&\subset|mK_{\mathfrak{A}_{\delta}}||_{X_0}.
 && \text{by (**).}
\end{align*}
Clearly, we have $|mK_{\mathfrak{A}_{\delta}}||_{X_0}\subset|mK_{\mathfrak{A}_{\delta,0}}|$. Therefore, the equality $|mK_{\mathfrak{A}_{\delta,0}}|=|mK_{\mathfrak{A}_{\delta}}||_{X_0}$ holds.
\end{proof}

\begin{proof}[Proof of Theorem \ref{inv vol semistable}]
 This is a special case of Theorem \ref{inv vol}.   
\end{proof}

\begin{rem}\label{g-pair}
    We cannot simply add the ample divisor $\frac{1}{k}A$ to the boundary of $(X,\CF,B)$ because the resulting foliated triple $(X,\CF,B+\frac{1}{k}A)$ may not remain lc. See \cite[Example 3.4]{DLM23}. This issue arises from the failure of Bertini-type theorems. Nevertheless, $(X,\CF,B,\frac{1}{k}\Bar{A})$ is lc as a generalized foliated quadruple. For the notion of generalized foliated quadruples, we follow \cite{CHLX23}. We omit this technical definition here for the reader's convenience.
\end{rem}

\section{Proof of the main theorem}\label{section: main thm}
In this section we prove Theorem \ref{main thm last}. We start with the following definition. For the usual definition of a \emph{bounded family} of foliations, see for example, \cite[Definition 3.31]{CHLMSSX25}.
\begin{defn}[Birationally bounded by algebraically integrable families]\label{bir bdd fol triples}
    Let $\CS$ be a set of foliated triples. We say that $\CS$ is \emph{birationally bounded by algebraically integrable families} if there exist finitely many foliated triples $(X^i,\CF^i,B^i)$, equipped with dominant rational maps $f^i:X^i\dasharrow Z^i$ between normal varieties over base varieties $T^i$ for $i=1,...,N$, such that:
    \begin{enumerate}
        \item $B^i$ is a reduced divisor, and the structural morphisms $X^i\to T^i$ and $Z^i\to T^i$ are projective,
        \item $\CF^i$ is induced by $f^i$ (in particular, it is algebraically integrable), and
        \item for every $(Y,\CG,C)\in \CS$, there exists an index $i\in\{1,...,N\}$, a closed point $t\in T^i$ and a birational map $\phi: Y\dasharrow X^i_t$ such that:
 \begin{enumerate}
    \item the restriction of $f^i$ to the fibre $X^i_t$ induces a well-defined dominant rational map $f^i_t: X^i_t\dasharrow Z^i_t$ between normal varieties,
    \item $(X^i_t,\CF^i_t,B^i_t)$ is a foliated triple, where $\CF^i_t$ is the foliation induced by $f^i_t$ and $B^i_t$ is the restriction of $B^i$ to $X^i_t$,
    \item $\phi_*\CG=\CF^i_t$, and
    \item $\Supp B^i_t$ contains $\Supp \phi_{*}C$ and the support of any $\phi^{-1}$-exceptional divisor.
 \end{enumerate}
\end{enumerate}
\end{defn}

\begin{rem}\label{rem: bir bdd foliations}
    If a set $\CS$ of foliated triples is birationally bounded by algebraically integrable families, then the general leaves of elements in $\CS$ are also birationally bounded as algebraic varieties (cf. \cite[Definition 2.4.1]{HMX1}). However, as discussed in \cite[Example 4.3]{passantino24}, there exists a set $\CS$ of rank one algebraically integrable foliations on surfaces, such that $\CS$ is bounded (cf. \cite[Definition 3.31]{CHLMSSX25}), but the leaves of elements in $\CS$ (which are irreducible curves) have unbounded genus.
\end{rem}

\begin{proof}[Proof of Theorem \ref{main thm last}]
	The strategy of the proof is similar to that of \cite[Theorem 1.9]{HMX1}.

    \medskip
    \noindent\textbf{Step 1}. In this step we fix notation and make standard reductions. We may assume that $1\in I$. Possibly replacing $\CS$ with a subset, we may assume that
    $\CS$ is birationally bounded by a single algebraically integrable family $f:X\dasharrow Z/T$. Let $(Y,\CG,C)\in \CS$. By assumption, there is a closed point $t\in T$ and a foliated triple $(X_t,\CF_t,B_t)$ equipped with a birational map $\phi: Y\dasharrow X_t$ such that $\phi_*\CG=\CF_t$ and $\Supp \phi_*C\cup\Exc \phi^{-1}\subset\Supp B_t$, where $\CF_t$ is induced by the dominant rational map $f_t:X_t\dasharrow Z_t$, see Definition \ref{bir bdd fol triples}.
    
	Suppose that $p:Y'\to Y$ is a foliated log resolution of $(Y,\CG,C)$ (cf. Definition \ref{def: fol log res}) such that the induced map $Y'\to X_t$ is a morphism. Let $\CG'=p^{-1}\CG$ and $C'=p_*^{-1}C+(\text{Exc p})^{\CG'}$. Then $(Y',\CG',C')\in \CS$ and $$\vol(Y,K_{\CG}+C)=\vol(Y',K_{\CG'}+C').$$ 
    
    Replacing $(Y,\CG, C)$ by $(Y',\CG',C')$, we may assume that $\phi$ is a morphism, and we are free to replace $X$ and $Z$ by higher birational models. We may therefore assume that $f$ is a morphism.
	
	Passing to a stratification of $T$, we may assume that both $f$ and $Z\to T$ are contractions between normal varieties, and the points that parameterize elements in $\CS$ are dense in $T$. Passing to an open set of $T$, we may also assume that the fibres of both $(X,B)\to T$ and $Z\to T$ are log pairs. 
 For any $(Y,\CG,C)\in \CS$, Let $\phi:Y\to X_t$ denote the corresponding birational morphism. 
We have $\phi_*C\leq B_t$. Since $(Y,\CG,C)$ is lc, every component of $\phi_*C$ is horizontal$/Z_t$. Consequently, we may remove the vertical$/Z$ components of $B$ and assume that every component of $B$ is horizontal$/Z$.

    \medskip
	\noindent\textbf{Step 2}. In this step, we reduce to the case where $f:(X,B)\to Z/T$ is a family of weak semistable morphisms (cf. Definition \ref{weak ss family}) using weak semistable reduction \cite{AK00}. 
    
    Replacing $f:(X,B)\to Z$ with its weak semistable model (cf. Theorem \ref{weak semistable reduction}), we may assume that $f:(X,\Sigma_X)\to (Z,\Sigma_Z)$ is weak semistable with respect to the divisors $\Sigma_X$ and $\Sigma_Z$. We may also assume that $B= \Sigma_X^{h}$, the horizontal$/Z$ part of $\Sigma_X$. Passing to a non-empty open subset of $T$ and taking the Stein factorization, we may assume that $Z\to T$ is a contraction to a smooth variety $T$ and $(Z,\Sigma_Z)$ is log smooth over $T$. Thus $(X,B)\to Z/T$ is a family of weak semistable morphisms.
    
    Since the construction above involves taking a finite cover of $Z$, we must correspondingly replace each $(Y,\CG,C)\in \CS$ with a suitable finite cover, such that the coefficients of $C$ still belong to $I$. The existence of such a cover is guaranteed by \cite[Proposition 3.15]{HJLL24} (see also \cite[Proposition 2.2]{CS251} and \cite[Lemmas 3.4 and 4.3]{Dru21}). Note that although the volume $\vol(Y,K_\CG+C)$ is multiplied by the degree of the finite cover, this degree depends only on the initial family $f:X\to Z/T$ and thus belongs to a fixed finite set. Consequently, it suffices to work with the new set of triples consisting of these covers of the elements in the original set. Furthermore, passing to an \'etale cover of $T$ (cf. \cite[Claim 4.38.1]{Kol13}), we may assume that each stratum of $\Sigma^h_X$ has irreducible fibres over $T$.
    
    \medskip
    \noindent\textbf{Step 3}. In this step, we complete the proof using Theorem \ref{inv vol} and Proposition \ref{dcc of fixed model}. Fix a closed point $0\in T$. Let $\CS_0\subset \CS$ be the set of lc projective algebraically integrable foliated triples $(Y,\CG,C)$ such that the coefficients of $C$ belong to $I$, and there exists a birational morphism $\beta:Y\to X_0$ with $\beta_*\CG=\CF_0$ and $\beta_*C\leq B_0$.
	
	Let $(Y,\CG, C)\in \CS$. By assumption, there exists a closed point $t\in T$ and a birational morphism $\phi:Y\to X_t$ such that $\phi_*\CG=\CF_t$. Define $\Phi=\phi_*C$. Note that $\Phi\le B_t=\Sigma_{X_t}^h$. Replacing $(Y,\CG,C)$ with its ACSS model, we may assume that $(Y,\CG,C)$ is $\bQ$-factorial ACSS (cf. Lemma \ref{ACSS model exist}). By \cite[Theorem 9.4.1]{CHLX23}, after running a $(K_\CG+C)$-MMP$/X_t$ with scaling of an ample divisor we reach a model on which the pushdown of $K_\CG+C$ is a limit of movable$/X_t$ $\bR$-divisors. Replacing $(Y,\CG,C)$ with this model and using the general negativity lemma (cf. \cite[Lemma 3.3]{Bir12}), we may assume that $$K_{\CG}+C\leq\phi^*(K_{\CF_t}+\Phi).$$

 If a divisor $E_t$ on $Y$ satisfies $a(E_t,\CF_t,\Phi)<0$, then it is $\CF_t$-non-invariant and toroidal with respect to $(X_t,\Sigma_{X_t})$. Therefore, its centre on $X_t$ is a stratum of $\Sigma^h_{X_t}$. There is a birational morphism $\phi':Y'\to X_t$ that extracts all such divisors $E_t$. Note that $\phi'$ is a sequence of toroidal extractions of strata of $(X_t,\Phi)$ (cf. Notation \ref{toroidal extraction}). This implies that the induced map $\alpha: Y\dasharrow Y'$ is a birational contraction. Set $C'=\alpha_*C$ on $Y'$. Then $(Y',\CG',C')\in \CS$, and $$\vol(Y',K_{\CG'}+C')=\vol(Y,K_{\CG}+C).$$ 
 
 Replacing $(Y,\CG,C)$ with $(Y',\CG',C')$, we may assume that $\phi$ is a sequence of toroidal extractions of strata of $\Sigma^h_{X_t}$.
Note that all the divisors $E_t$ extracted by $\phi$ are toroidal divisors of $(X_t,\Sigma_{X_t})$ and their centres on $X_t$ are strata of $\Sigma^h_{X_t}$. Since every stratum of $\Sigma^h_X$ has irreducible fibres over $T$ by \textbf{Step 2}, every stratum of $\Sigma^h_{X_t}$ arises as the intersection of $X_t$ with a unique stratum of $\Sigma^h_X$. Thus, we can find a sequence of toroidal extractions of strata of $\Sigma^h_X$, denoted by $p:X'\to X$, which induces $\phi$. In particular, we can identify $Y$ with  the fibre $X'_t$. Moreover, there is a unique divisor $\Omega$ on $X'$ supported on the toroidal boundary $\Sigma_{X'}$ of $X'$ such that $C=\Omega_t$. By Lemma \ref{extract hor wss}, $f':(X',\Sigma_{X'})\to (Z,\Sigma_Z)/T$ is still a family of weak semistable morphisms with $0\le\Omega\le \Sigma^h_{X'}$. Let $\CF'$ be the foliation induced by $f'$. Then Theorem \ref{inv vol} implies that $$\vol(Y,K_\CG+C)=\vol(X'_t,K_{\CF'_t}+\Omega_t)=
 \vol(X'_0,K_{\CF'_0}+\Omega_0).$$
It follows that $$\{\vol(Y,K_{\CG}+C)\,|\,(Y,\CG,C)\in \CS\}=\{\vol(Y,K_{\CG}+C)\,|\,(Y,\CG,C)\in \CS_0\}.$$
Now apply Proposition \ref{dcc of fixed model}.
\end{proof}

\vspace{0.6cm}
\section*{Acknowledgements}
The author expresses gratitude to his advisor Chen Jiang for great support and encouragement. He is grateful to Jingjun Han and Lingyao Xie for answering numerous questions and giving valuable suggestions. He would like to thank Paolo Cascini for his feedback. He would like to thank Dan Abramovich for answering questions on toroidal geometry. He would also like to thank Ziqi Liu, Pengjin Wang, and Qingyuan Xue for useful discussions.

This work was supported by National Key Research and Development Program
of China (No. 2023YFA1010600) and NSFC for Innovative
Research Groups (No. 12121001).
\vspace{0.6cm}
\printbibliography

\end{document}